\documentclass[12pt,article]{amsart}
\usepackage{mathrsfs}
\usepackage{amssymb}
\usepackage{amsfonts}
\usepackage{amsbsy}
\usepackage{latexsym}
\usepackage{amssymb,latexsym,amsmath,amsthm}
\usepackage{graphicx}
\usepackage{epsfig,psfrag}
\setlength{\topmargin}{0.25in} \setlength{\textheight}{8.0in}
\setlength{\oddsidemargin}{0.25in}
\setlength{\evensidemargin}{0.25in} \setlength{\textwidth}{6.0in}
\theoremstyle{plain}

 \theoremstyle{remark} 




\newtheorem {theo} {\bf Theorem} [section]

\newtheorem {lem} [theo] {\bf Lemma}

\newtheorem{rem}{\bf Remark}[section]

\newtheorem{comparison} {\bf Comparison} [section]


\numberwithin{equation}{section}
\begin{document}
\title[Framelets and nonuniform sampling approximations in Sobolev spaces]{Framelet  perturbation and  application to nouniform sampling  approximation for Sobolev space}
\author{Youfa Li}
\address{College of Mathematics and Information Science\\
Guangxi University,  Naning, China }
\email{youfalee@hotmail.com}
\author{Deguang Han}
\address{Department of Mathematics\\
University of Central Florida\\ Orlando, FL 32816}
\email{deguang.han@ucf.edu}
\thanks{Youfa Li is partially supported by Natural Science Foundation of China (Nos: 61561006, 11501132) and Natural Science Foundation of Guangxi (No: 2016GXNSFAA380049). Deguang Han  is partially supported by the NSF grant DMS-1403400.}
\keywords{Sobolev space, framelet series, truncation error,
perturbation error, nonuniform sampling approximation.}
\subjclass[2010]{Primary 42C40; 65T60; 94A20}

\date{\today}

\begin{abstract} The Sobolev space $H^{s}(\mathbb{R}^{d})$, where $s > d/2$, is an
important function space that  has many applications in various
areas of research. Attributed to the inertia of a measuring
instrument, it is desirable in sampling theory to reconstruct a
function by its nonuniform samples. In the present paper, we
investigate the problem of constructing the
 approximation to all the functions in $H^{s}(\mathbb{R}^{d})$ with nonuniform samples by utilizing dual
framelet systems  for  the Sobolev space pair
$(H^{s}(\mathbb{R}^{d}), H^{-s}(\mathbb{R}^{d}))$. We first
establish the convergence rates of the framelet series in
$(H^{s}(\mathbb{R}^{d}), H^{-s}(\mathbb{R}^{d}))$, and  then
construct the framelet approximation operator holding for the entire
space $H^{s}(\mathbb{R}^{d})$. Using the  approximation operator,
any function in $H^{s}(\mathbb{R}^{d})$ can be approximated at the
exponential rate with respect to the scale level. We examine the
stability property for the perturbations of the framelet
approximation operator with respect to shift parameters, and obtain
an estimate bound for the perturbation error. Our result  shows that
under the condition $s > d/2$, the approximation operator is robust
to the shift perturbation. These results are used to establish the
nonuniform sampling approximation for every function in
$H^{s}(\mathbb{R}^{d})$. In particular, the new nonuniform sampling
approximation error is robust to the jittering of the samples.
\end{abstract}
\maketitle

\section{Introduction}\label{section1}
Sampling is a fundamental tool for the conversion between an
analogue signal and its digital form (A/D). The most classical
sampling theory is the Whittaker-Kotelnikov-Shannon (WKS) sampling
theorem \cite{C. Shannon1,C. Shannon2}, which states   that a
bandlimited signal can be perfectly reconstructed  if it is sampled
at a rate greater than its Nyquist frequency.
The WKS sampling theorem holds only for bandlimited signals. In
order to extend the sampling theorem to non-bandlimited signals,
researchers have established various sampling theorems for many
other function spaces. Such examples include  the sampling theory
for shift-invariant subspaces (c.f.
\cite{Aldroubi1,Aldroubi2,Qiyu1,df1,df0}), for reproducing kernel
subspaces of $L^{2}(\mathbb{R}^{d})$ (c.f.
\cite{HD2,Qiyu1,Qiyu0,Wenchang2,HD1}), and for subspaces from the
generalized sinc function (c.f. \cite{CQL}).

For any $s\in\mathbb{R}$,  the Sobolev space $H^{s}(\mathbb{R}^{d})$
is defined as
\begin{align}\label{defi_s} H^{s}(\mathbb{R}^{d})=\Big\{f: \int_{\mathbb{R}^{d}}|\widehat{f}(\xi)|^{2} (1+||\xi||_{2}^{2})^{s}d\xi<\infty\Big\},\end{align}
 where
$\widehat{f}(\xi):=\int_{\mathbb{R}^{d}}f(x)e^{-ix\cdot \xi}dx$ is
the Fourier transform of $f$. When $s>d/2$, the function theory of
$H^{s}(\mathbb{R}^{d})$  has been extensively applied to various
problems such as the boundedness of the Fourier multiplier operator
\cite{youyong3, youyong2, youyong1}, viscous shallow water system
\cite{youyong4,youyong5}, PDE \cite{youyong6}, and signal analysis
\cite{saPP, Matla}. On the other hand, it will be seen in Theorem
\ref{Theorem x2} or Remark \ref{lubax} that the condition $s>d/2$ is
necessary to guarantee that the approximation system in
$H^{s}(\mathbb{R}^{d})$ is robust to the perturbation of the shift
parameters, which is crucial for our construction of nonuniform
sampling approximation. Moreover, it is easy to check that many
frequently used  spaces such as the bandlimited function space,
wavelet subspaces \cite{Chui,AFJ04}  and the cardinal B-spline
subspaces \cite{Chui,Hanbin1} (in which the generator is continuous)
are all contained in $H^{s}(\mathbb{R}^{d})$. Readers are referred
to Han and Shen \cite{Hanbin1} for the Sobolev smoothness  of box
splines.

 Since
$H^{s}(\mathbb{R}^{d})$ and $H^{-s}(\mathbb{R}^{d})$ are isometric
under a mapping provided in the proof of \cite[Proposition
2.1]{Hanbin1}, we can treat $H^{-s}(\mathbb{R}^{d})$ as the dual
space of $H^{s}(\mathbb{R}^{d})$. It can  be seen in Theorem
\ref{Theorem x4} or \cite{Youfa1} that by using   special dual
framelets in $(H^{s}(\mathbb{R}^{d}), H^{-s}(\mathbb{R}^{d}))$, the
inner products can expressed directly by the values of functions,
which makes the sampling approximation possible. In the
one-dimensional $(d =1$) case,  a uniform sampling theorem  for all
the functions in $H^{s}(\mathbb{R})$,  where $s>1/2$, was
established by Li and Yang  in \cite{Youfa1}. Attributed to the
inertia of a measuring instrument, the samples we acquire may well
be jittered and thus nonuniform \cite{Wenchangui,Wenchangui1,Qiyu0}.
Therefore it seems necessary to establish a theory for nonuniform
sampling  for  all the functions in $H^{s}(\mathbb{R^{d}}^{d})$. The
purpose of this paper is to build such a theory by using a pair of
dual framelet system for the Sobolev space pair
$(H^{s}(\mathbb{R}^{d}), H^{-s}(\mathbb{R}^{d}))$ $(s > d/2$).

We first  introduce some necessary notations and terminologies  for
framelets in Sobolev spaces. More details can be found in Han and
Shen \cite{Hanbin1} where the dual framelets for the dual pair
$(H^{s}(\mathbb{R}^{d}), H^{-s}(\mathbb{R}^{d}))$ were first
introduced. We remark that, comparing  with those in
$L^{2}(\mathbb{R}^{d})$, the framelet in $H^{s}(\mathbb{R}^{d})$
does not necessarily have vanishing moment. Therefore the
construction of the framelet system seems much more easier in this
case. Readers are referred to \cite{Hanbin2,Hanbin3} for Han's
continuing work in the distribution spaces.

By \eqref{defi_s}, $H^{s}(\mathbb{R}^{d})$ is equipped with the
inner product $\langle\cdot,\cdot\rangle_{H^{s}(\mathbb{R}^{d})}$
defined by
\begin{align}\label{neiji} \langle f,g\rangle_{H^{s}(\mathbb{R}^{d})}=\frac{1}{(2\pi)^{d}}\int_{\mathbb{R}^{d}}\widehat{f}(\xi)\overline{\widehat{g}(\xi)}(1+||\xi||_{2}^{2})^{s}d\xi,
\ \ \ \ \forall f,g\in H^{s}(\mathbb{R}^{d}),\end{align} where
$\overline{\widehat{g}}$ is the complex conjugate.
 The deduced norm
$||\cdot||_{H^{s}(\mathbb{R}^{d})}$ of
$\langle\cdot,\cdot\rangle_{H^{s}(\mathbb{R}^{d})}$ is naturally
given by
$$||f||_{H^{s}(\mathbb{R}^{d})}=\frac{1}{(2\pi)^{d/2}}\Big(\int_{\mathbb{R}^{d}}|\widehat{f}(\xi)|^{2}(1+||\xi||_{2}^{2})^{s}d\xi\Big)^{1/2}.$$
It is easy to check that  the bilinear functional $\langle \cdot,
\cdot \rangle: (H^{s}(\mathbb{R}^{d}),
H^{-s}(\mathbb{R}^{d}))\longrightarrow \mathbb{C}$ defined by
$$\langle f, g
\rangle=\frac{1}{(2\pi)^{d}}\int_{\mathbb{R}^{d}}\widehat{f}(\xi)\overline{\widehat{g}(\xi)}d\xi,
\ \forall f\in H^{s}(\mathbb{R}^{d}),  g\in H^{-s}(\mathbb{R}^{d})$$
satisfies $|\langle f, g \rangle|\leq
||f||_{H^{s}(\mathbb{R}^{d})}||g||_{H^{-s}(\mathbb{R}^{d})}.$
 Straightforward
observation on \eqref{defi_s} gives  that
$H^{s_{1}}(\mathbb{R}^{d})$ $\supseteq H^{s_{2}}(\mathbb{R}^{d})$ if
and only if $s_{1}\leq s_{2}$. When $s=0$, we have that
$H^{0}(\mathbb{R}^{d})=L^{2}(\mathbb{R}^{d})$, and the corresponding
norm $||\cdot||_{H^{0}(\mathbb{R}^{d})}$ is the usuall $L_{2}$-norm
$||\cdot||_{2}$. In what follows we will use the same norm
denotation $||\cdot||_{2}$ for  $L^{2}(\mathbb{R}^{d})$ and  the
Euclidean space $\mathbb{R}^{d}.$ The two norms can be easily
identified from the
 context. For any $f\in H^{s}(\mathbb{R}^{d})$, define its bracket product $[f,
f]_{s}$ as
\begin{align}\label{opq}[f,
f]_{s}(\xi):=\sum_{k\in\mathbb{Z}^{d}}|\widehat{f}(\xi+2k\pi)|^{2}(1+||\xi+2k\pi||_{2}^{2})^{s}.\end{align}
When $f$ is compactly supported,  we have that  $[f, f]_{s}\in
L_{\infty}(\mathbb{R}^{d})$.  We refer to Han's method \cite{WEZ}
for more information about the bracket product estimation.

A $d\times d$ integer matrix $M$ is referred to as  a dilation
matrix if all its eigenvalues are strictly larger than $1$ in
modulus. Throughout this paper, we are interested in the case that
$M$ is isotropic. Specifically, $M$ is similar to
$\mbox{diag}(\lambda_{1},\lambda_{2},\cdots,\lambda_{d})$ with
$|\lambda_{k}|=m:=|\det M|^{1/d}$ for $k=1, 2, \ldots, d.$ Denote by
$\Gamma_{M^{T}}$ the complete set of representatives of distinctive
cosets of the quotient group
$[(M^{T})^{-1}\mathbb{Z}^{d}]/\mathbb{Z}^{d}.$
%
Suppose that $\phi\in H^{s}(\mathbb{R}^{d}), s\in\mathbb{R},$ is an
$M$-refinable function given by
\begin{align}\label{yy1}\widehat{\phi}(M^{T}\cdot)=\widehat{a}(\cdot)\widehat{\phi}(\cdot),\end{align}
where $\widehat{a}(\cdot):=\sum_{k\in\mathbb{Z}^{d}}a[k]e^{ik\cdot}$
is referred to as the mask symbol of $\phi$, and
 $\{\psi^{\ell}\}^{L}_{\ell=1}$ is a set of  wavelet functions
 defined by
\begin{align}\label{yy3}\widehat{\psi^{\ell}}(M^{T}\cdot)=\widehat{b^{\ell}}(\cdot)\widehat{\phi}(\cdot),\end{align}
where the $2\pi \mathbb{Z}^{d}$-periodic trigonometric polynomial
$\widehat{b^{\ell}}(\cdot)$ is the mask symbol of $\psi^{\ell}$. Now
a wavelet system $X^{s}(\phi;$ $\psi^{1},\ldots,\psi^{L})$ in
$H^{s}(\mathbb{R}^{d})$ is defined as
\begin{align}\begin{array}{lllll}\label{lisan}
X^{s}(\phi;\psi^{1},\ldots,\psi^{L})&:=\{\phi_{0,k}:k\in
\mathbb{Z}^{d}\}
\\&\quad \cup\{\psi^{\ell,s}_{j,k}:k\in\mathbb{Z}^{d}, j\in
\mathbb{N}_{0},\ell=1,\ldots,L\},
\end{array}\end{align}
where $\phi_{0,k}=\phi(\cdot-k)$,
$\psi^{\ell,s}_{j,k}=m^{j(d/2-s)}\psi^{\ell}(M^{j}\cdot-k)$ and
$\mathbb{N}_{0}:=\mathbb{N}\cup\{0\}$. If there exist  two positive
constants $C_{1}$ and $C_{2}$ such that
\begin{align}\label{hanbin3}\begin{array}{lllll}
C_{1}||f||_{H^{s}(\mathbb{R}^{d})}^{2}\displaystyle\leq
\sum_{k\in\mathbb{Z}^{d}}|\langle
f,\phi_{0,k}\rangle_{H^{s}(\mathbb{R}^{d})}|^{2}
 +\sum^{L}_{\ell=1}\sum_{j\in\mathbb{N}_{0}}\sum_{k\in
\mathbb{Z}^{d}} |\langle f,\psi^{\ell,s}_{j,k}
\rangle_{H^{s}(\mathbb{R}^{d})}|^{2} \leq
C_{2}||f||_{H^{s}(\mathbb{R}^{d})}^{2}
\end{array}\end{align}
holds for every $ f\in H^{s}(\mathbb{R}^{d}),$ then we say that
 $X^{s}(\phi;\psi^{1},\ldots,\psi^{L})$ is an $M$-framelet system in
$H^{s}(\mathbb{R}^{d})$. If there exists another $M$-framelet system
$X^{-s}(\widetilde{\phi};\widetilde{\psi}^{1},\ldots,\widetilde{\psi}^{L})$
in $H^{-s}(\mathbb{R}^{d})$ such that for any $ f\in
H^{s}(\mathbb{R}^{d})$ and $ g\in H^{-s}(\mathbb{R}^{d})$, there
holds
\begin{align}\label{hanbin4}\langle f,g\rangle=\sum_{k\in \mathbb{Z}^{d}}\langle
\phi_{0,k}, g\rangle\langle f, \widetilde{\phi}_{0,k}\rangle+
\sum^{L}_{\ell=1}\sum_{j\in \mathbb{\mathbb{N}}_{0}}\sum_{k\in
\mathbb{Z}^{d}} \langle \psi^{\ell,s}_{j,k}, g \rangle\langle f,
\widetilde{\psi}^{\ell,-s}_{j,k} \rangle,
\end{align}
then we say that $X^{s}(\phi;\psi^{1},\ldots,\psi^{L})$ and
$X^{-s}(\widetilde{\phi};\widetilde{\psi}^{1},\ldots,\widetilde{\psi}^{L})$
form a pair of dual $M$-framelet systems in
$(H^{s}(\mathbb{R}^{d}),H^{-s}(\mathbb{R}^{d}))$. For any function
$f\in H^{s}(\mathbb{R}^{d})$, it follows
 from \eqref{hanbin4} that
\begin{align}\begin{array}{lll}\label{Albert1}
f=\displaystyle\sum_{k\in \mathbb{Z}^{d}}\langle f,
\widetilde{\phi}_{0,k}\rangle \phi_{0,k}+
\sum^{L}_{\ell=1}\sum_{j\in \mathbb{\mathbb{N}}_{0}}\sum_{k\in
\mathbb{Z}^{d}} \langle f, \widetilde{\psi}^{\ell,-s}_{j,k} \rangle
\psi^{\ell,s}_{j,k}.
\end{array}\end{align}

Our goal is to construct the nonuniform sampling approximation to
any function $f\in H^{s}(\mathbb{R}^{d}),$ $ s>d/2$.  Our
 approximation will be derived from the
truncation form $\mathcal{S}_{\phi}^{N}f$ of the series in
\eqref{Albert1}, defined by
\begin{align}\label{rt} \mathcal{S}_{\phi}^{N}f:=\sum_{k\in
\mathbb{Z}^{d}}\langle f, \widetilde{\phi}_{0,k}\rangle
\phi_{0,k}+\sum^{L}_{\ell=1}\sum^{N-1}_{j=0}\sum_{k\in
\mathbb{Z}^{d}} \langle f, \widetilde{\psi}^{\ell,-s}_{j,k} \rangle
\psi^{\ell,s}_{j,k},\end{align} where $N$
 is sufficiently large.  The first
natural problem is how to
 estimate the approximation error $||(I-\mathcal{S}_{\phi}^{N})f||$, where $I$ is the identity operator, and $||\cdot||$ is the desired norm.
  When $f$ belongs to the Schwartz class of functions,
 the estimate of $||(I-\mathcal{S}_{\phi}^{N})f||_{2}$ was given in  \cite[Theorem 16]{Skopina}.
In  \cite{fagep},  the approximation error
$||(I-\mathcal{S}_{\phi}^{N})f||_{H^{s}(\mathbb{R}^{d})}$ was
 estimated when $f$ satisfies  $$|\widehat{f}(\xi)|\leq C(1+||\xi||_{2})^{\frac{-d-\alpha}{2}} \ \hbox{for} \ \hbox{every}\ \xi\in \mathbb{R}^{d},$$
with $\alpha>0$ and  a constant  $C$ being dependent on $f$. When
 the framelet system
$X^{-s}(\widetilde{\phi};\widetilde{\psi}^{1},\ldots,\widetilde{\psi}^{L})$
belongs to $L^{2}(\mathbb{R}^{d})$, for
 any $f\in H^{s}(\mathbb{R}^{d})$, the estimate of
 $||(I-\mathcal{S}_{\phi}^{N})f||_{2}$ was obtained  in  \cite{Hanbin4} and  \cite{Skopina}.  In the present paper,
by  using a special pair of dual framelet systems
 $X^{s}(\phi;\psi^{1},\ldots,\psi^{L})$ and
$X^{-s}(\widetilde{\phi};\widetilde{\psi}^{1},\ldots,\widetilde{\psi}^{L})$,
we aim at constructing the nonuniform sampling approximation to all
the functions in $H^{s}(\mathbb{R}^{d})$, where the system
$X^{-s}(\widetilde{\phi};\widetilde{\psi}^{1},\ldots,\widetilde{\psi}^{L})$
in $H^{-s}(\mathbb{R}^{d})$ does not actually  belong  to
$L^{2}(\mathbb{R}^{d})$. Therefore  in order to construct the
sampling approximation in this setting, we need to estimate
$||(I-\mathcal{S}_{\phi}^{N})f||$ for any $f\in
H^{s}(\mathbb{R}^{d})$,   not requiring
$X^{-s}(\widetilde{\phi};\widetilde{\psi}^{1},\ldots,\widetilde{\psi}^{L})$
being in  $L^{2}(\mathbb{R}^{d})$.  Such an estimate will be
presented in Theorem \ref{Theorem x1}.

It  will be seen in \eqref{integral1} and \eqref{suanzidigyi}
 that the nonuniformity of samples is
substantially derived from the perturbation of shifts of the
sampling system $\{\Delta^{-s}_{N,k}\}_{k\in
\mathbb{Z}^{d}}\subseteq H^{-s}(\mathbb{R}^{d})$, where $\Delta\in
H^{-s}(\mathbb{R}^{d})$ is a special refinable function to be
defined in \eqref{caiyanghanshu},  and $\Delta^{-s}_{N,k}$ will be
given via \eqref{dingyi}. Thus, in order to construct the nonuniform
sampling approximation,  we need to establish the estimate  for the
perturbation error of $\mathcal{S}_{\phi}^{N}f$
 when the shifts of
 $\{\widetilde{\phi}^{-s}_{N,k}\}_{k\in
\mathbb{Z}^{d}}$ are perturbed, where $\widetilde{\phi}$ is any
refinable function in $H^{-s}(\mathbb{R}^{d})$. Our second main
Theorem \ref{Theorem xe4} establishes such an error estimate.

In the Section 3 we present a main application of our two main
results. By using a pair of dual framelets for
$(H^{s}(\mathbb{R}^{d}), H^{-s}(\mathbb{R}^{d})),$   we are able to
construct the nonuniform sampling approximation to any function in
$H^{s}(\mathbb{R}^{d})$ where $s>d/2$.   We also compare  the main
results  of this paper with  the existing ones in the literature,
and present two simulation examples to demonstrate the approximation
efficiency in numerical experiments.


\section{Perturbed  framelet approximation system in Sobolev space}

In this section we will first  estimate the convergence rate of  the
coefficient sequence $\big\{\langle f,
\widetilde{\psi}^{\ell,-s}_{j,k} \rangle\big\}$ in \eqref{Albert1}.
Based on the convergence rate estimation, the approximation error
$(I-\mathcal{S}_{\phi}^{N})f$ for any $f\in H^{s}(\mathbb{R}^{d})$
will be estimated, not requiring
$X^{-s}(\widetilde{\phi};\widetilde{\psi}^{1},\ldots,\widetilde{\psi}^{L})$
being in  $L^{2}(\mathbb{R}^{d})$. Moreover, the corresponding
perturbation error of $\mathcal{S}_{\phi}^{N}f$ will be given when
the shifts of approximation system
$\{\widetilde{\phi}^{-s}_{N,k}\}_{k\in \mathbb{Z}^{d}}$ are
perturbed, where $\widetilde{\phi}^{-s}_{N,k}$ will be  defined in
\eqref{dingyi}.

\subsection{Framelet approximation system}
\label{section2}  For any $\alpha:=(\alpha_{1}, \alpha_{2}, \ldots,
\alpha_{d})\in \mathbb{N}^{d}_{0}$ and $x:=(x_{1}, x_{2}, \ldots,
x_{d})\in\mathbb{R}^{d}$, define
$x^{\alpha}=\prod^{d}_{k=1}x^{\alpha_{k}}_{k}$. For any function $f:
\mathbb{R}^{d}\longrightarrow \mathbb{C}$, its $\alpha$th partial
derivative $\frac{\partial^{\alpha}}{\partial x^{\alpha}}f$ is
defined as
$$\frac{\partial^{\alpha}}{\partial x^{\alpha}}f=\frac{\partial_{1}^{\alpha_{1}}\partial_{2}^{\alpha_{2}}\cdots
\partial_{d}^{\alpha_{d}}}{\partial x_{1}^{\alpha_{1}}\partial x_{2}^{\alpha_{2}}\cdots
\partial x_{d}^{\alpha_{d}}}f.$$ We say that a function  $f:
\mathbb{R}^{d}\longrightarrow \mathbb{C}$ has $\kappa+1
(\in\mathbb{N})$ vanishing moments  if
$$\frac{\partial^{\alpha}}{\partial x^{\alpha}}\widehat{f}(0)=0$$
for any  $\alpha\in\mathbb{N}^{d}_{0}$ such that $||\alpha||_{1}\leq
\kappa$, where $||\cdot||_{1}$ is the $1$-norm of a vector.

In the proof of Theorem \ref{Theorem x1}, we will see that the
crucial task for estimating
$||(I-\mathcal{S}_{\phi}^{N})f||_{H^{s}(\mathbb{R}^{d})}$ is  to
estimate the convergence rate of the coefficient sequence $\{\langle
f, \widetilde{\psi}^{\ell,-s}_{j,k} \rangle\}_{j,k,\ell}$ in
\eqref{Albert1}. As such, we first estimate the convergence rate of
$\{\langle f, \widetilde{\psi}^{\ell,-s}_{j,k} \rangle\}_{j,k,\ell}$
in Lemma \ref{Lemma 2.1}.

\begin{lem}\label{Lemma 2.1}
Let $s>0$ and $\widetilde{\phi}\in H^{-s}(\mathbb{R}^{d})$ be
$M$-refinable. Moreover, suppose that $\widetilde{\phi}\in
H^{-t}(\mathbb{R}^{d})$ where $0<t<s$.  A wavelet function
$\widetilde{\psi}$ given   by
$\widehat{\widetilde{\psi}}(M^{T}\cdot)=\widehat{\widetilde{b}}(\cdot)\widehat{\widetilde{\phi}}(\cdot)$
has $\kappa+1$ vanishing moments, where $\widehat{\widetilde{b}}$ is
a $2\pi\mathbb{Z}^{d}$-periodic trigonometric polynomial,
$\kappa\in\mathbb{N}_{0}$ and $\kappa+1>t$.
 Then there exists a positive constant $G(\widehat{\widetilde{b}}, s, t)$ such
 that for any $ f\in H^{\varsigma}(\mathbb{R}^{d})$, it holds
\begin{align} \label{lanlan}\displaystyle\sum^{\infty}_{j=N}\sum_{k\in \mathbb{Z}^{d}}|\langle
f, \widetilde{\psi}^{-s}_{j,k}\rangle|^{2}\leq
G(\widehat{\widetilde{b}}, s,
t)||f||^{2}_{H^{\varsigma}(\mathbb{R}^{d})} m^{-2N\eta_{\kappa+1}(s,
\varsigma)}, \end{align} where $t<s<\varsigma<\kappa+1$ and
\begin{align}\label{rt609} \eta_{\kappa+1}(s, \varsigma):=(\kappa+1-s)(\varsigma-s)/(\kappa+1+\varsigma-s).\end{align}
\end{lem}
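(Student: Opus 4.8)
The plan is to reduce the tail sum in \eqref{lanlan} to a single-scale estimate of the form $\sum_{k\in\mathbb{Z}^{d}}|\langle f,\widetilde{\psi}^{-s}_{j,k}\rangle|^{2}\leq C\,||f||^{2}_{H^{\varsigma}(\mathbb{R}^{d})}\,m^{-2j\eta_{\kappa+1}(s,\varsigma)}$ for every $j\geq N$, and then to sum the resulting geometric series over $j\geq N$. First I would pass to the Fourier side. Writing $\widetilde{\psi}^{-s}_{j,k}=m^{j(d/2+s)}\widetilde{\psi}(M^{j}\cdot-k)$ and changing variables $\zeta=(M^{T})^{-j}\xi$, the coefficient becomes $\langle f,\widetilde{\psi}^{-s}_{j,k}\rangle=m^{j(s+d/2)}(2\pi)^{-d}\int_{\mathbb{R}^{d}}\widehat{f}((M^{T})^{j}\zeta)\overline{\widehat{\widetilde{\psi}}(\zeta)}\,e^{ik\cdot\zeta}\,d\zeta$, i.e. up to the factor $m^{j(s+d/2)}$ it is the $k$th Fourier coefficient of the $2\pi\mathbb{Z}^{d}$-periodization of $\widehat{f}((M^{T})^{j}\cdot)\overline{\widehat{\widetilde{\psi}}(\cdot)}$. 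Parseval on the torus then gives
\[
\sum_{k\in\mathbb{Z}^{d}}|\langle f,\widetilde{\psi}^{-s}_{j,k}\rangle|^{2}
=\frac{m^{2j(s+d/2)}}{(2\pi)^{d}}\int_{[-\pi,\pi]^{d}}
\Big|\sum_{n\in\mathbb{Z}^{d}}\widehat{f}\big((M^{T})^{j}(\zeta+2\pi n)\big)\,
\overline{\widehat{\widetilde{\psi}}(\zeta+2\pi n)}\Big|^{2}\,d\zeta .
\]
I would then split $\widehat{f}=\widehat{f}\,\chi_{\{||\xi||_{2}\leq R\}}+\widehat{f}\,\chi_{\{||\xi||_{2}> R\}}$ at a scale-dependent radius $R=R_{j}$ and treat the two pieces $T_{j}^{(1)},T_{j}^{(2)}$ separately.

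For the low-frequency piece $T_{j}^{(1)}$, the restriction $||\xi||_{2}\leq R$ forces $\zeta$ into a neighbourhood of the origin of radius comparable to $m^{-j}R<\pi$, so no aliasing occurs and only the $n=0$ term survives. Here the $\kappa+1$ vanishing moments enter through the bound $|\widehat{\widetilde{\psi}}(\zeta)|\leq C|\zeta|^{\kappa+1}$, valid on the whole fundamental domain because $\widehat{\widetilde{\psi}}$ is smooth near $0$ and vanishes there to order $\kappa+1$. Using the isotropy of $M$ (so that $|(M^{T})^{j}\zeta|$ is comparable to $m^{j}|\zeta|$), changing variables back to $\xi$, and then crudely estimating $|\xi|^{2(\kappa+1)}\leq R^{2(\kappa+1)}$ and $\int_{||\xi||_{2}\leq R}|\widehat{f}|^{2}\leq(2\pi)^{d}||f||^{2}_{H^{\varsigma}(\mathbb{R}^{d})}$, I obtain $T_{j}^{(1)}\leq C\,m^{2j(s-(\kappa+1))}R^{2(\kappa+1)}||f||^{2}_{H^{\varsigma}(\mathbb{R}^{d})}$.

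For the high-frequency piece $T_{j}^{(2)}$ I would apply the Cauchy--Schwarz inequality in $n$ against the weight $(1+|(M^{T})^{j}(\zeta+2\pi n)|^{2})^{s}$. This separates the $f$-factor, whose integral over $[-\pi,\pi]^{d}$ equals $m^{-jd}(2\pi)^{d}||f||^{2}_{H^{s}(\mathbb{R}^{d})}$ after restriction, from the wavelet factor $\sum_{|(M^{T})^{j}(\zeta+2\pi n)|>R}|\widehat{\widetilde{\psi}}(\zeta+2\pi n)|^{2}(1+|(M^{T})^{j}(\zeta+2\pi n)|^{2})^{-s}$. The crucial point is that on this high-frequency range the last weight is comparable to $m^{-2js}|\zeta+2\pi n|^{-2s}$, so the sum gains a factor $m^{-2js}$ that exactly cancels the prefactor $m^{2j(s+d/2)}m^{-jd}=m^{2js}$; what remains is finite by a bracket-product estimate in the spirit of \eqref{opq}, uniformly in $\zeta$, where the hypotheses $\widetilde{\phi}\in H^{-t}(\mathbb{R}^{d})$ with $t<s$ and $\kappa+1>t$ are precisely what secures the convergence and uniform control of these periodized sums (and $\kappa+1>s$ handles the near-origin term). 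This yields the \emph{scale-free} bound $T_{j}^{(2)}\leq C\,||f||^{2}_{H^{s}(\mathbb{R}^{d})}$, and extracting $R^{-2(\varsigma-s)}$ from the high-frequency part of the $H^{s}$-norm (since $(1+||\xi||_{2}^{2})^{-(\varsigma-s)}\leq R^{-2(\varsigma-s)}$ there) gives $T_{j}^{(2)}\leq C\,R^{-2(\varsigma-s)}||f||^{2}_{H^{\varsigma}(\mathbb{R}^{d})}$.

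Finally I would balance the two estimates by choosing $R_{j}=m^{j\gamma}$ with $\gamma=(\kappa+1-s)/(\kappa+1+\varsigma-s)\in(0,1)$, which equates the two exponents and produces $\sum_{k}|\langle f,\widetilde{\psi}^{-s}_{j,k}\rangle|^{2}\leq C\,||f||^{2}_{H^{\varsigma}(\mathbb{R}^{d})}\,m^{-2j\eta_{\kappa+1}(s,\varsigma)}$; the inequality $\gamma<1$ guarantees $R_{j}<m^{j}$, legitimizing the no-aliasing assumption used for $T_{j}^{(1)}$. Summing $\sum_{j\geq N}m^{-2j\eta_{\kappa+1}(s,\varsigma)}$ then yields \eqref{lanlan}, with $G(\widehat{\widetilde{b}},s,t)$ absorbing the bracket-product constants (depending on $\widehat{\widetilde{b}}$ and, through the convergence of the wavelet's bracket sums, on $t$). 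I expect the high-frequency estimate to be the main obstacle: one must check that the scale factors cancel exactly so that the bound is $m^{j}$-independent and decays like $R^{-2(\varsigma-s)}$, which rests on the uniform boundedness of the periodized weighted sums of $|\widehat{\widetilde{\psi}}|^{2}$ under the $H^{-t}$ hypothesis. Confirming that the optimization reproduces precisely the exponent $\eta_{\kappa+1}(s,\varsigma)$ at an admissible radius is then a short but essential computation.
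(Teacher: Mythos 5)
Your proposal follows essentially the same route as the paper: split frequency space at a threshold $m^{(\cdot)\nu}$, use the $\kappa+1$ vanishing moments on the low frequencies to gain $m^{-2j[(\kappa+1)(1-\nu)-s]}$, use the extra $\varsigma-s$ of Sobolev regularity on the high frequencies to gain $m^{-2j\nu(\varsigma-s)}$, and optimize $\nu=(\kappa+1-s)/(\kappa+1+\varsigma-s)$ to land exactly on $\eta_{\kappa+1}(s,\varsigma)$; your per-scale split at $R_j=m^{j\gamma}$ followed by a geometric sum over $j\geq N$ is only a cosmetic reorganization of the paper's single split at $m^{N\nu}$ with the $j$-sum carried inside the integral, and your balancing computation is correct.

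The one substantive point where you should align with the paper concerns where the vanishing-moment zero is placed. You invoke the pointwise bound $|\widehat{\widetilde{\psi}}(\zeta)|\leq C|\zeta|^{\kappa+1}$ near the origin, justified by "smoothness of $\widehat{\widetilde{\psi}}$ near $0$". Under the lemma's hypotheses ($\widetilde{\phi}\in H^{-t}$, $M$-refinable) the Fourier transform $\widehat{\widetilde{\phi}}$, and hence $\widehat{\widetilde{\psi}}=\widehat{\widetilde{b}}((M^{T})^{-1}\cdot)\widehat{\widetilde{\phi}}((M^{T})^{-1}\cdot)$, need not be $C^{\kappa+1}$ in a neighbourhood of $0$, so this Taylor-type bound is not free; vanishing of the derivatives at the single point $0$ does not by itself yield an $O(|\zeta|^{\kappa+1})$ estimate. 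The paper avoids this by transferring the order-$(\kappa+1)$ zero to the mask symbol, using $|\widehat{\widetilde{b}}(\xi)|\leq C_{0}(\widehat{\widetilde{b}})\|\xi\|_{2}^{\kappa+1}$ (legitimate globally because $\widehat{\widetilde{b}}$ is a bounded trigonometric polynomial), and keeping $\widehat{\widetilde{\phi}}$ entirely inside the bracket product $\|[\widehat{\widetilde{\phi}},\widehat{\widetilde{\phi}}]_{-t}\|_{L^{\infty}}$ extracted once and for all in the Bessel-type inequality \eqref{df3}; this is also precisely where the auxiliary hypothesis $\widetilde{\phi}\in H^{-t}$ with $t<\kappa+1$ enters. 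Your high-frequency estimate via Cauchy--Schwarz against the weight and a uniform periodized sum is the content of the cited \cite[Lemma 2.2]{Li0} and is fine; just route the low-frequency estimate through $\widehat{\widetilde{b}}$ rather than $\widehat{\widetilde{\psi}}$ and the argument closes.
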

\begin{proof}
By the vanishing moment property  of $\widetilde{\psi}$, there
exists a positive constant $C_{0}(\widehat{\widetilde{b}})$ such
that
$$|\widehat{\widetilde{b}}(\xi)|\leq C_{0}(\widehat{\widetilde{b}})||\xi||^{\kappa+1}_{2} \ \mbox{for} \ \mbox{any} \ \xi\in \mathbb{R}^{d}.$$
By the similar procedure as \cite[(2.9, 2.10)]{Li0}, we have
\begin{align}\begin{array}{lllll}\label{df3}
\displaystyle\sum^{\infty}_{j=N}\sum_{k\in \mathbb{Z}^{d}}|\langle f, \widetilde{\psi}^{-s}_{j,k}\rangle|^{2}\\
\leq\displaystyle
\frac{m^{d}||[\widehat{\widetilde{\phi}},\widehat{\widetilde{\phi}}]_{-t}||_{L^{\infty}(\mathbb{R}^{d})}}{(2\pi)^{d}}
\displaystyle\int_{\mathbb{R}^{d}}
|\widehat{f}(\xi)|^{2}\sum^{\infty}_{j=N}m^{2js}|\widehat{\widetilde{b}}((M^{T})^{-j-1}\xi)|^{2}(1+||(M^{T})^{-j-1}\xi||^{2}_{2})^{t}d\xi.
\end{array}\end{align}
The integral in \eqref{df3} is split into the two parts as follows,
\begin{align}\begin{array}{lllll}\notag
I_{N,1}=\displaystyle\int_{||\xi||_{2}<m^{N\nu}}
|\widehat{f}(\xi)|^{2}\sum^{\infty}_{j=N}m^{2js}|\widehat{\widetilde{b}}((M^{T})^{-j-1}\xi)|^{2}(1+||(M^{T})^{-j-1}\xi||^{2}_{2})^{t}
d\xi,
\end{array}\end{align}
and
\begin{align}\begin{array}{lllll}
I_{N,2}=\displaystyle\displaystyle \notag\int_{||\xi||_{2}\geq
m^{N\nu}}
|\widehat{f}(\xi)|^{2}\sum^{\infty}_{j=N}m^{2js}|\widehat{\widetilde{b}}((M^{T})^{-j-1}\xi)|^{2}(1+||(M^{T})^{-j-1}\xi||^{2}_{2})^{t}d\xi,
\end{array}\end{align}
where $\nu\in(0,1)$ will be optimally selected.  At first, the term
$I_{N,1}$ is estimated as follows,
\begin{align}\begin{array}{lllll}\label{df1}
\displaystyle I_{N,1}&\leq\displaystyle
2^{t}C_{0}(\widehat{\widetilde{b}})^{2}\int_{||\xi||_{2}<m^{N\nu}}
|\widehat{f}(\xi)|^{2}(1+||\xi||_{2}^{2})^{s}\sum^{\infty}_{j=N}||(M^{T})^{-j-1}\xi)||_{2}^{2\kappa+2}
m^{2js}  d\xi\\
&=\displaystyle
2^{t}C_{0}(\widehat{\widetilde{b}})^{2}\int_{||\xi||_{2}<m^{N\nu}}
|\widehat{f}(\xi)|^{2}(1+||\xi||_{2}^{2})^{s}\sum^{\infty}_{j=N}||m^{-j-1}\xi||_{2}^{2\kappa+2}
m^{2js} d\xi\\
&\leq\displaystyle
2^{t}C_{0}(\widehat{\widetilde{b}})^{2}\int_{||\xi||_{2}<m^{N\nu}}
|\widehat{f}(\xi)|^{2}(1+||\xi||_{2}^{2})^{s}m^{-2(\kappa+1)}m^{2N\nu(\kappa+1)}\sum^{\infty}_{j=N}m^{-2j(\kappa+1-s)} d\xi\\
&=\displaystyle2^{t}C_{0}(\widehat{\widetilde{b}})^{2}m^{-2(\kappa+1)}\frac{m^{-2N[(\kappa+1)(1-\nu)-s]}}{1-m^{-2(\kappa+1-s)}}
\int_{||\xi||_{2}<m^{N\nu}}
|\widehat{f}(\xi)|^{2}(1+||\xi||_{2}^{2})^{s}d\xi\\
&\leq\displaystyle2^{t}(2\pi)^{d}C_{0}(\widehat{\widetilde{b}})^{2}m^{-2(\kappa+1)}\frac{m^{-2N[(\kappa+1)(1-\nu)-s]}}{1-m^{-2(\kappa+1-s)}}
||f||^{2}_{H^{s}(\mathbb{R}^{d})}.
\end{array}\end{align}
We next estimate $I_{N,2}$.
 For any $\xi\in\mathbb{R}^{d}$,
it follows from \cite[Lemma 2.2]{Li0} that
\begin{align} \label{quantity} \sum^{\infty}_{j=N}m^{2js}|\widehat{\widetilde{b}}((M^{T})^{-j-1}\xi)|^{2}(1+||\xi||^{2}_{2})^{-s}(1+||(M^{T})^{-j-1}\xi||^{2}_{2})^{t}\leq
C_{1}(\widehat{\widetilde{b}}, s,t),\end{align} where
\begin{align}\label{zjg} C_{1}(\widehat{\widetilde{b}}, s,t):=\frac{||\widehat{b}(\xi)||_{L^{\infty}(\mathbb{R}^{d})}^{2}2^{t}}{m^{2(s-t)}-1}+
\frac{C_{0}(\widehat{\widetilde{b}})^{2}}{1-m^{-2(\kappa+1-s)}}.\end{align}
Then
\begin{align}\begin{array}{lllll}\label{df2}\displaystyle
I_{N,2}&=\displaystyle\int_{||\xi||_{2}\geq m^{N\nu}}
|\widehat{f}(\xi)|^{2}(1+||\xi||^{2}_{2})^{s}\Big[\sum^{\infty}_{j=N}m^{2js}|\widehat{\widetilde{b}}((M^{T})^{-j-1}\xi)|^{2}(1+||\xi||^{2}_{2})^{-s}\\
&\quad  \times (1+||(M^{T})^{-j-1}\xi||^{2}_{2})^{t}\Big]d\xi\\
&\leq\displaystyle C_{1}(\widehat{\widetilde{b}},
s,t)\int_{||\xi||_{2}\geq m^{N\nu}}
|\widehat{f}(\xi)|^{2}(1+||\xi||^{2}_{2})^{s}d\xi\\
&\leq\displaystyle C_{1}(\widehat{\widetilde{b}},
s,t)m^{-2N\nu(\varsigma-s)}\int_{||\xi||_{2}\geq m^{N\nu}}
|\widehat{f}(\xi)|^{2}(1+||\xi||^{2}_{2})^{\varsigma}d\xi\\
&\leq\displaystyle (2\pi)^{d}C_{1}(\widehat{\widetilde{b}},
s,t)m^{-2N\nu(\varsigma-s)}||f||^{2}_{H^{\varsigma}(\mathbb{R}^{d})}.
\end{array}\end{align}
 By \eqref{df3}, \eqref{df1} and
\eqref{df2}, we obtain
$$\sum^{\infty}_{j=N}\sum_{k\in \mathbb{Z}}|\langle f, \widetilde{\psi}^{-s}_{j,k}\rangle|^{2}
=\mbox{O}\Big(m^{-2N\min\big\{(\kappa+1)(1-\nu)-s, \ \nu
(\varsigma-s)\big\}}\Big).$$ It is easy to prove that  the
convergence rate of
$$\sum^{\infty}_{j=0}\sum_{k\in \mathbb{Z}}|\langle f,
\widetilde{\psi}^{-s}_{j,k}\rangle|^{2}$$ reaches the optimal
converging order $\mbox{O}(m^{-2N\eta_{\kappa+1}(s, \varsigma) })$
if selecting $\nu:=(\kappa+1-s)/(\kappa+1+\varsigma-s)$, where
$\eta_{\kappa+1}$ is defined in \eqref{rt609}. Define
\begin{align}\label{gbound} G(\widehat{\widetilde{b}}, s, t):=m^{d}||[\widehat{\widetilde{\phi}},\widehat{\widetilde{\phi}}]_{-t}||_{L^{\infty}(\mathbb{R}^{d})}\Big(\frac{2^{t}C_{0}(\widehat{\widetilde{b}})^{2}m^{-2(\kappa+1)}}{1-m^{-2(\kappa+1-s)}}+C_{1}(\widehat{\widetilde{b}}, s, t)\Big).\end{align}
It follows from \eqref{df3}, \eqref{df1} and \eqref{df2} that
\begin{align}\begin{array}{lllll}\label{ucf}
\displaystyle\sum^{\infty}_{j=N}\sum_{k\in \mathbb{Z}^{d}}|\langle
f, \widetilde{\psi}^{-s}_{j,k}\rangle|^{2} \leq
G(\widehat{\widetilde{b}}, s, t) m^{-2N\eta_{\kappa+1}(s,
\varsigma)}||f||^{2}_{H^{\varsigma}(\mathbb{R}^{d})}.
\end{array}\end{align}
The proof is concluded.
\end{proof}

Based on  the convergence rate estimation
 in \eqref{lanlan} of Lemma \ref{Lemma 2.1}, we next estimate the approximation error
$||(I-\mathcal{S}_{\phi}^{N})f||$ in Theorem \ref{Theorem x1}.


\begin{theo}\label{Theorem x1}
Suppose that $X^{s}(\phi;\psi^{1}, \psi^{2},\ldots, \psi^{L})$ and
$X^{-s}(\widetilde{\phi};\widetilde{\psi}^{1},
 \widetilde{\psi}^{2}, \ldots, \widetilde{\psi}^{L})$ form a pair of dual $M$-framelet systems
 for $(H^{s}(\mathbb{R}^{d}),H^{-s}(\mathbb{R}^{d}))$. Moreover,  assume that $\phi\in
 H^{\varsigma}(\mathbb{R}^{d})$,
$\widetilde{\phi}\in H^{-t}(\mathbb{R}^{d})$  and
 $\widetilde{\psi}^{\ell}$ has $\kappa+1$ vanishing moments, where  $0<t<s<\varsigma<\kappa+1$,
$\kappa\in\mathbb{N}_{0}$, and $\ell=1,2, \ldots, L$.
 Then  there exists a positive constant $C(s, \varsigma)$  such that
\begin{align}\begin{array}{lll} \label{bound1}
\displaystyle
||(I-\mathcal{S}_{\phi}^{N})f||_{H^{s}(\mathbb{R}^{d})} \leq C(s,
\varsigma) m^{-\eta_{\kappa+1}(s,  \varsigma)
N}||f||_{H^{\varsigma}(\mathbb{R}^{d})}, \forall f\in
H^{\varsigma}(\mathbb{R}^{d}),
\end{array}
\end{align}
 where  $\eta_{\kappa+1}$ is defined
in \eqref{rt609}.
\end{theo}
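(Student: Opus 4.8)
The plan is to reduce the desired norm bound to the coefficient bound of Lemma~\ref{Lemma 2.1} by means of the synthesis (Bessel) bound of the primal framelet system $X^{s}(\phi;\psi^{1},\ldots,\psi^{L})$. First I would identify the approximation error with the high-scale tail of the expansion \eqref{Albert1}: subtracting \eqref{rt} from \eqref{Albert1}, the coarse term and the scales $0\le j\le N-1$ cancel, leaving
$$(I-\mathcal{S}_{\phi}^{N})f=\sum_{\ell=1}^{L}\sum_{j=N}^{\infty}\sum_{k\in\mathbb{Z}^{d}}\langle f,\widetilde{\psi}^{\ell,-s}_{j,k}\rangle\,\psi^{\ell,s}_{j,k}.$$
In other words, the error is the image, under the synthesis operator of $X^{s}$, of the coefficient array $\{\langle f,\widetilde{\psi}^{\ell,-s}_{j,k}\rangle\}_{\ell,\,j\ge N,\,k}$, with all remaining coordinates set to zero.

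Next I would invoke the frame structure. Since $X^{s}(\phi;\psi^{1},\ldots,\psi^{L})$ is an $M$-framelet system in the Hilbert space $H^{s}(\mathbb{R}^{d})$, the upper inequality in \eqref{hanbin3} says its analysis operator has norm at most $\sqrt{C_{2}}$; passing to the adjoint, the synthesis operator carries the same bound, so for every $\ell^{2}$ coefficient array the $H^{s}$-norm of the synthesized function is at most $\sqrt{C_{2}}$ times the $\ell^{2}$-norm of the coefficients. Applying this to the tail above --- whose unconditional convergence in $H^{s}$ follows from the same Bessel bound once the coefficients are known to be square-summable --- gives
$$\|(I-\mathcal{S}_{\phi}^{N})f\|_{H^{s}(\mathbb{R}^{d})}\le\sqrt{C_{2}}\Big(\sum_{\ell=1}^{L}\sum_{j=N}^{\infty}\sum_{k\in\mathbb{Z}^{d}}|\langle f,\widetilde{\psi}^{\ell,-s}_{j,k}\rangle|^{2}\Big)^{1/2}.$$

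Finally I would apply Lemma~\ref{Lemma 2.1} to each dual wavelet $\widetilde{\psi}^{\ell}$ separately. Its hypotheses --- $\widetilde{\phi}\in H^{-t}(\mathbb{R}^{d})$, $\kappa+1$ vanishing moments, and $0<t<s<\varsigma<\kappa+1$ --- are exactly those assumed in the theorem, so each inner sum is bounded by $G(\widehat{\widetilde{b}^{\ell}},s,t)\,\|f\|_{H^{\varsigma}(\mathbb{R}^{d})}^{2}\,m^{-2N\eta_{\kappa+1}(s,\varsigma)}$, with $\eta_{\kappa+1}$ as in \eqref{rt609} and $G$ as in \eqref{gbound}. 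Summing the $L$ contributions and taking square roots produces \eqref{bound1} with $C(s,\varsigma)=\sqrt{C_{2}}\big(\sum_{\ell=1}^{L}G(\widehat{\widetilde{b}^{\ell}},s,t)\big)^{1/2}$.

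The two reductions are routine once set up; the only delicate point is the passage from the coefficient estimate to the norm estimate, that is, justifying the synthesis bound $\sqrt{C_{2}}$ together with the unconditional $H^{s}$-convergence of the tail series. This is precisely where the framelet (frame) property of $X^{s}$, rather than mere completeness or density, is indispensable.
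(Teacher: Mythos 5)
Your proposal is correct and follows essentially the same route as the paper's proof: identify $(I-\mathcal{S}_{\phi}^{N})f$ with the high-scale tail of \eqref{Albert1}, bound the synthesis operator of $X^{s}(\phi;\psi^{1},\ldots,\psi^{L})$ by the adjoint of the analysis operator (the paper writes this as $\emph{\textsf{P}}^{*}$ with $\|\emph{\textsf{P}}^{*}\|=\|\emph{\textsf{P}}\|$, quoting an explicit bound $h(s,\varsigma)$ where you use the abstract Bessel constant $\sqrt{C_{2}}$), and then apply Lemma~\ref{Lemma 2.1} to each $\widetilde{\psi}^{\ell}$. The only cosmetic difference is that the paper additionally optimizes the resulting constant over the auxiliary parameter $t$ to remove its dependence on $t$, which does not affect the validity of your argument.
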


\begin{proof}
Denote by $l^{2}(\mathbb{Z}^{d}\times \mathbb{N}_{0}\times
\mathbb{Z}^{d} \times L)$  the space of square summable sequences
supported on $\mathbb{Z}^{d}\times \mathbb{N}_{0}\times
\mathbb{Z}^{d} \times L.$ Let
 $\emph{\textsf{P}}: H^{s}(\mathbb{R}^{d})\rightarrow l^{2}(\mathbb{Z}^{d}\times \mathbb{N}_{0}\times
\mathbb{Z}^{d} \times L)$ be the analysis operator of
$X^{s}(\phi;\psi^{1}, \psi^{2},\ldots, \psi^{L})$. That is, for any
$g\in H^{s}(\mathbb{R}^{d})$,
\begin{align}\nonumber
\emph{\textsf{P}}g:=\Big\{\langle
g,\phi_{0,n}\rangle_{H^{s}(\mathbb{R}^{d})},\langle
g,\psi^{\ell,s}_{j,k} \rangle_{H^{s}(\mathbb{R}^{d})}:  n, k\in
\mathbb{Z}^{d}, j\in \mathbb{N}_{0},\ell=1, \ldots, L\Big\}.
\end{align}
 By \eqref{hanbin3},
$\emph{\textsf{P}}$ is a bounded operator from
$H^{s}(\mathbb{R}^{d})$ to $l^{2}$. Then
\begin{align}\label{fg}
||\emph{\textsf{P}}g||_{2}&\leq
||\emph{\textsf{P}}||||g||_{H^{s}(\mathbb{R}^{d})}.
\end{align}
By the isomorphic map $\theta_{s}:
H^{s}(\mathbb{R}^{d})\longrightarrow H^{-s}(\mathbb{R}^{d})$ defined
by
$$\widehat{\theta_{s}g}(\xi)=\widehat{g}(\xi)(1+||\xi||_{2}^{2})^{s}, \forall g\in H^{s}(\mathbb{R}^{d}),$$
it is easy to prove that \eqref{hanbin3} holds with $g$ being
replaced by any $\widetilde{g}\in H^{-s}(\mathbb{R}^{d}).$
Therefore, by \cite[Theorem 2.1]{Li0},
\begin{align}\label{fgzhuhai}
||\emph{\textsf{P}}||&\leq h(s, \varsigma),
\end{align}
where
$$h(s, \varsigma)=\Big(L||[\widehat{\phi},
\widehat{\phi}]_{\varsigma}||_{L^{\infty}}\big[1+\frac{m^{d}}{(2\pi)^{d}}(\frac{m^{2(\varsigma+s)}2^{s}}{m^{2(\varsigma-s)}-1}+\frac{2^{s}}{1-m^{-2s}})\max_{1\leq\ell\leq
L}\{||\widehat{b^{\ell}}||_{L^{\infty}}\}\big]\Big)^{1/2}.$$ Next we
compute  $\emph{\textsf{P}}^{*}$, the adjoint operator of
$\emph{\textsf{P}}$. For any $c\in l^{2}(\mathbb{Z}^{d}\times
\mathbb{N}_{0}\times \mathbb{Z}^{d} \times L)$ and  $g\in
H^{s}(\mathbb{R}^{d})$,
$$\langle \emph{\textsf{P}}^{*}c, g\rangle_{H^{s}(\mathbb{R}^{d})}=\langle c, \emph{\textsf{P}}g\rangle_{l^{2}}=\sum_{k\in \mathbb{Z}^{d}}c_{k}
\langle g,\phi_{0,k}\rangle_{H^{s}(\mathbb{R}^{d})}+
\sum^{L}_{\ell=1}\sum_{j\in \mathbb{\mathbb{N}}_{0}}\sum_{k\in
\mathbb{Z}^{d}} c^{-s}_{j,k,\ell} \langle g,\psi^{\ell,s}_{j,k}
\rangle_{H^{s}(\mathbb{R}^{d})},$$ where the elements are $c_{k}$
and $c^{-s}_{j,k,\ell}$. Therefore,
$$\emph{\textsf{P}}^{*}c=\sum_{k\in \mathbb{Z}^{d}}c_{k}
\phi_{0,k}+ \sum^{L}_{\ell=1}\sum_{j\in
\mathbb{\mathbb{N}}_{0}}\sum_{k\in \mathbb{Z}^{d}} c^{-s}_{j,k,\ell}
\psi^{\ell,s}_{j,k}.$$ From
$||\emph{\textsf{P}}^{*}||=||\emph{\textsf{P}}||$, we arrive at
\begin{align}\label{dfg}||\emph{\textsf{P}}^{*}(c)||_{H^{s}(\mathbb{R}^{d})}\leq
||\emph{\textsf{P}}|| ||c||_{l^{2}}.\end{align} For any $f\in
H^{\varsigma}(\mathbb{R}^{d})$,  it follows from \eqref{dfg},
\eqref{fgzhuhai} and \eqref{lanlan} that
\begin{align}\label{guji}\begin{array}{lll}
\displaystyle\displaystyle
||\sum^{L}_{\ell=1}\sum^{\infty}_{j=N}\sum_{k\in \mathbb{Z}^{d}}
\langle f, \widetilde{\psi}^{\ell,-s}_{j,k} \rangle
\psi^{\ell,s}_{j,k}||_{H^{s}(\mathbb{R}^{d})}
&\displaystyle\leq||\emph{\textsf{P}}||\Big(
\sum^{L}_{\ell=1}\sum^{\infty}_{j=N}\sum_{k\in \mathbb{Z}^{d}}
\displaystyle|\langle f, \widetilde{\psi}^{\ell,-s}_{j,k} \rangle|^{2}\Big)^{1/2}\\
&\leq h(s, \varsigma) \sqrt{G(s,t)} m^{-N\eta_{\kappa+1}(s,
\varsigma)}||f||_{H^{\varsigma}(\mathbb{R}^{d})},
\end{array}
\end{align}
where
$$G(s,t)=\sum^{L}_{\ell=1}G(\widehat{\widetilde{b}_{\ell}},
s,t). $$  Herein, $\widehat{\widetilde{b}_{\ell}}$ is the mask
symbol of $\widetilde{\psi}^{\ell}$, and
$G(\widehat{\widetilde{b}_{\ell}}, s,t, \widetilde{\phi})$ is
defined via \eqref{gbound}; namely,
\begin{align}\label{gbbse} G(\widehat{\widetilde{b}_{\ell}},
s,t)=\frac{m^{d}||[\widehat{\widetilde{\phi}},\widehat{\widetilde{\phi}}]_{-t}||_{L^{\infty}(\mathbb{R}^{d})}}{(2\pi)^{d}}\Big(\frac{2^{t}C_{0}(\widehat{\widetilde{b}_{\ell}})^{2}m^{-2(\kappa+1)}}{1-m^{-2(\kappa+1-s)}}+C_{1}(\widehat{\widetilde{b}_{\ell}},
s, t)\Big),\end{align} where  $C_{1}(\widehat{\widetilde{b}_{\ell}},
s, t)$ is defined  via \eqref{zjg} by replacing
$\widehat{\widetilde{b}}$ with $\widehat{\widetilde{b}_{\ell}}$. In
\eqref{gbbse}, when $t$ decreases (increases),
$||[\widehat{\widetilde{\phi}},\widehat{\widetilde{\phi}}]_{-t}||_{L^{\infty}(\mathbb{R}^{d})}$
increases (decreases) while $C_{1}(\widehat{\widetilde{b}_{\ell}},
s, t)$ decreases (increases). On other hand,
$C_{1}(\widehat{\widetilde{b}_{\ell}}, s, t)$ is continuous with
respect to $t$, and it is easy to prove by the dominated convergence
theorem  that
$||[\widehat{\widetilde{\phi}},\widehat{\widetilde{\phi}}]_{-t}||_{L^{\infty}(\mathbb{R}^{d})}$
is also continuous with respect to $t$. Therefore, there exists
$t_{0}\in (-\infty, s)$ such that
$$G(s,t_{0})=\min_{t\in (-\infty, s)} G(s,t).$$ Now we select
$$C(s, \varsigma):=h(s, \varsigma) \sqrt{G(s,t_{0})}$$
to conclude the proof.
\end{proof}

\begin{rem} In Theorem \ref{Theorem x1}, $X^{-s}(\widetilde{\phi};\widetilde{\psi}^{1},
 \widetilde{\psi}^{2}, \ldots, \widetilde{\psi}^{L})$ is any
 framelet system in $H^{-t}(\mathbb{R}^{d})$, and it is not
 necessary in $L^{2}(\mathbb{R}^{d})$. Moreover, the error estimate
 given in \eqref{bound1} holds for any  $ f\in
H^{\varsigma}(\mathbb{R}^{d})$, where $ 0<t<s<\varsigma.$ It follows
from \eqref{rt} and Theorem \ref{Theorem x1} that $f$ can be
approximated by using the inner products $\langle f,
\widetilde{\phi}_{0,k}\rangle$ and $\langle f,
\widetilde{\psi}^{\ell,-s}_{j,k} \rangle$. Now  two necessary
procedures are carried out to construct its approximation. The first
step is to construct a refinable function in
$H^{\varsigma}(\mathbb{R}^{d})$, which has the desired sum rules and
Sobolev smoothness. This can be easily accomplished by   box
splines. We refer to \cite{Hanbin1,Riemenschneider} for the Sobolev
smoothness and sum rules of  box splines. On other hand,  we need to
compute the inner products $\langle f,
\widetilde{\phi}_{0,k}\rangle$ and $\langle f,
\widetilde{\psi}^{\ell,-s}_{j,k} \rangle$. For any system
$X^{-s}(\widetilde{\phi};\widetilde{\psi}^{1}, \widetilde{\psi}^{2},
\ldots, \widetilde{\psi}^{L})$, it is difficult to exactly compute
the   inner products. Fortunately, however,  the computational
problem can be solved by a special framelet system from the
refinable function $\Delta$ to be defined in \eqref{caiyanghanshu}.

\end{rem}

\subsection{Shift-perturbed  approximation system in $H^{s}(\mathbb{R}^{d})$}
\label{buguize} Recall that the operator $\mathcal{S}_{\phi}^{N}$ in
\eqref{rt} is defined  via the system $\{\widetilde{\phi}_{0,k},
\phi_{0,k}, \widetilde{\psi}^{\ell,-s}_{j,k}, \psi^{\ell,s}_{j,k}:
k\in \mathbb{Z}^{d}, j=0, 1, \ldots, N-1, \ell=1, 2, \ldots, L\}$.
 We next use  a more
concise system to reexpress  $\mathcal{S}_{\phi}^{N}$. We start with
the construction of dual $M$-framelets in $(H^{s}(\mathbb{R}^{d}),
H^{-s}(\mathbb{R}^{d})).$
 Assume that  $\phi$ and $\widetilde{\phi}$ are the
$M$-refinable functions in $H^{s}(\mathbb{R}^{d})$ and
$H^{-s}(\mathbb{R}^{d})$, respectively.  Moreover,  $\phi$ has
$\kappa+1$
 sum rules with $\kappa\in\mathbb{N}_{0}$; namely, there exists a
$2\pi\mathbb{Z}^{d}$-periodic trigonometric polynomial $\widehat{Y}$
with $\widehat{Y}(0)\neq0$ such that $\widehat{a}$, the mask symbol
of $\phi$,  satisfies
$$
\widehat{Y}(M^{T}\cdot)\widehat{a}(\cdot+2\pi\gamma)=\delta_{\gamma}\widehat{Y}(\cdot)+\mbox{O}(||\cdot||_{2}^{\kappa+1}),
\ \forall\gamma\in\Gamma_{M^{T}},$$ where $\Gamma_{M^{T}}$ is
defined in the sentence above \eqref{yy1}, and $\{\delta_{\gamma}\}$
is a Dirac sequence such that $\delta_{0}=1$ and $\delta_{\gamma}=0$
for any $\gamma\neq0$. By  the mixed extension principle (MEP)
\cite[Algorithm 4.1]{Li0}, we can construct dual framelet systems
$X^{s}(\phi;\psi^{1}, \psi^{2},\ldots, \psi^{m^{d}})$ and
$X^{-s}(\widetilde{\phi};\widetilde{\psi}^{1},
 \widetilde{\psi}^{2}, \ldots, \widetilde{\psi}^{m^{d}})$
such that $\widetilde{\psi}^{1},
 \widetilde{\psi}^{2}, \ldots, \widetilde{\psi}^{m^{d}}$ all have $\kappa+1$ vanishing
moments. The key ingredient  of MEP is to construct the mask symbols
 $\{ \widehat{b^{1}},
\ldots, \widehat{b^{m^{d}}}\}$ of $\{\psi^{1}, \ldots,
\psi^{m^{d}}\}$, and $\{ \widehat{\widetilde{b}^{1}}, \ldots,
\widehat{\widetilde{b}^{m^{d}}}\}$ of $\{\widetilde{\psi}^{1},
\ldots, \widetilde{\psi}^{m^{d}}\}$ such that they satisfy
\begin{align}\label{hanbin9} \sum^{m^{d}}_{\ell=1}\overline{\widehat{b^{\ell}}}(\cdot+\gamma_{j})\widehat{\widetilde{b}^{\ell}}(\cdot)=\delta_{\gamma_{j}}-\overline{\widehat{a}}(\cdot+\gamma_{j})\widehat{\widetilde{a}}(\cdot+\gamma_{j}), \ \forall  j\in\{1, 2, \ldots, m^{d}\},\end{align}
where $\widehat{\widetilde{a}}$ is the mask symbol of
$\widetilde{\phi}$. From \eqref{hanbin9}, we arrive at
\begin{align}\label{rt1} \begin{array}{llllll} \displaystyle \mathcal{S}_{\phi}^{N}f=\displaystyle\sum_{k\in \mathbb{Z}^{d}} \langle
f,\widetilde{\phi}^{-s}_{N,k} \rangle
\phi^{s}_{N,k},\end{array}\end{align} where
\begin{align}\label{dingyi}
\phi^{s}_{N,k}=m^{N(d/2-s)}\phi(M^{N}\cdot-k), \quad
\widetilde{\phi}^{-s}_{N,k}=m^{N(d/2+s)}\widetilde{\phi}(M^{N}\cdot-k).\end{align}
 That is, we can use the system $\{\widetilde{\phi}^{-s}_{N,k},
\phi^{s}_{N,k}\}$ to reexpress  $\mathcal{S}_{\phi}^{N}$. By
\eqref{bound1},  when the scale level $N$ is sufficiently large, $f$
can be approximately reconstructed by using the inner products $
\langle f, \widetilde{\phi}^{-s}_{N,k} \rangle, k\in\mathbb{Z}^{d}$.

Let $\alpha>0.$ By $l^{\alpha}(\mathbb{Z}^{d}) $ we denote the
linear space of all sequence $\theta=\{\theta_{k}\}:
\mathbb{Z}^{d}\rightarrow \mathbb{R}^{d}$ such that
\begin{align}\label{fanshu} ||\theta||_{l^{\alpha}(\mathbb{Z}^{d})}:=\big(\sum_{k\in\mathbb{Z}^{d}}||\theta_{k}||^{\alpha}_{2}\big)^{1/\alpha}<\infty.\end{align}
 For
$\lambda\in\mathbb{R}^{d}$, a sequence
$\varepsilon:=\{\varepsilon_{k}: k\in \mathbb{Z}^{d}\}$ is
 $\lambda$-clustered in $l^{\alpha}(\mathbb{Z}^{d})$  if
\begin{align}\label{ghbv}
||\varepsilon-\lambda||_{l^{\alpha}(\mathbb{Z}^{d})}=\big(\sum_{k\in\mathbb{Z}^{d}}||\varepsilon_{k}-\lambda||_{2}^{\alpha}\big)^{1/\alpha}<\infty.
\end{align}
By \eqref{ghbv}, any $\lambda$-clustered sequence can be decomposed
into a sequence in $l^{\alpha}(\mathbb{Z}^{d})$ and a constant
sequence $\{\lambda\}$.
 For a $\lambda$-clustered sequence $\varepsilon$, define the operator $\mathcal{S}_{\phi;
\varepsilon}^{N}: H^{s}(\mathbb{R}^{d}) \longrightarrow
L^{2}(\mathbb{R}^{d})$  by
\begin{align} \label{suanzidigyi}   \mathcal{S}_{\phi; \varepsilon}^{N}f= \sum_{k\in \mathbb{Z}^{d}} \langle f,
m^{N(d/2+s)}\widetilde{\phi}(M^{N}\cdot-k-\varepsilon_{k})\rangle
\phi^{s}_{N,k}, \forall f\in H^{s}(\mathbb{R}^{d}),\end{align} where
$\phi$ and $\widetilde{\phi}$ are as in \eqref{rt1}.  By the direct
observation, $\mathcal{S}_{\phi; \varepsilon}^{N}$ is derived from
the perturbation of $\mathcal{S}_{\phi}^{N}$ with respect to the
shifts of $\widetilde{\phi}^{-s}_{N,k}, k\in \mathbb{Z}^{d}$. We
shall use $\mathcal{S}_{\phi; \varepsilon}^{N}$ to construct the
nonuniform sampling approximation in Section \ref{bugzecy}. A
crucial  task is to estimate $||(I-\mathcal{S}_{\phi;
\varepsilon}^{N})f||_{2}$. To the best of our knowledge, this
problem has not been solved in the literature. We shall establish
the error estimate  in Theorem \ref{Theorem xe4}. Incidently,  the
estimation of $||(I-\mathcal{S}_{\phi; \varepsilon}^{N})f||_{2}$ to
be given will guarantee  that the range of $\mathcal{S}_{\phi;
\varepsilon}^{N}$ is contained in $L^{2}(\mathbb{R}^{d})$. The
following lemma is useful for proving Theorem \ref{Theorem xe4}.

\begin{lem}\label{Lemma Xr} Let $J\geq\log_{m}d$ and $s>d/2$. Then
\begin{align} \label{op3} \displaystyle\sum_{||j||_{2}\geq m^{J}}||j||_{2}^{-2s}\leq
d^{1+s-d}2^{2s}\Big[\frac{2s-d+1}{2s-d}+\frac{2s}{2s-1}\Big]m^{-J(2s-d)}.\end{align}
\end{lem}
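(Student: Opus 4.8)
The plan is to dominate the lattice tail by a one–dimensional tail through a spherical–shell (equivalently, unit–cube) comparison, and then to estimate the resulting tail by an elementary sum–integral comparison. The hypothesis $J\ge\log_{m}d$ will enter only through its equivalent form $m^{J}\ge d$, which is what makes every discretization correction small relative to the radius.

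First I would organize the sum by integer shells. For $j\in\mathbb{Z}^{d}\setminus\{0\}$ put $n:=\lceil||j||_{2}\rceil$; since $||j||_{2}\ge1$ we have $n\le||j||_{2}+1\le 2||j||_{2}$, whence the pointwise bound
$$||j||_{2}^{-2s}\le 2^{2s}n^{-2s}.$$
This is exactly where the factor $2^{2s}$ of the statement is produced. Grouping the terms with a common value of $n$, and discarding the constraint $||j||_{2}\ge m^{J}$ in favour of the weaker $n\ge n_{0}:=\lceil m^{J}\rceil$ (legitimate since all terms are positive), gives
$$\sum_{||j||_{2}\ge m^{J}}||j||_{2}^{-2s}\le 2^{2s}\sum_{n\ge n_{0}}a_{n}\,n^{-2s},\qquad a_{n}:=\#\{j\in\mathbb{Z}^{d}:\,n-1<||j||_{2}\le n\}.$$

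Next I would bound the shell counts $a_{n}$ and sum the resulting one–dimensional tails. Each $j$ counted by $a_{n}$ carries a unit cube $j+[-\tfrac12,\tfrac12)^{d}$ lying in the fattened annulus $\{\,n-1-\tfrac{\sqrt d}{2}<||x||_{2}<n+\tfrac{\sqrt d}{2}\,\}$, so $a_{n}$ is controlled by the Lebesgue measure of that annulus; because $m^{J}\ge d$ forces $\sqrt d\le n$ throughout the relevant range, the analysis separates into a genuinely $d$–dimensional bulk term of size $C_{d}\,n^{d-1}$ and a lower–order residual that scales one–dimensionally, like $n^{-2s}$. The two pieces are then summed by the sum–integral comparison: for every $N\ge1$,
$$\sum_{n\ge N}n^{d-1-2s}\le N^{-(2s-d+1)}+\int_{N}^{\infty}t^{d-1-2s}\,dt=N^{-(2s-d)}\Big(N^{-1}+\frac{1}{2s-d}\Big)\le\frac{2s-d+1}{2s-d}\,N^{-(2s-d)},$$
which produces the first fraction, and likewise
$$\sum_{n\ge N}n^{-2s}\le N^{-(2s-1)}\Big(N^{-1}+\frac{1}{2s-1}\Big)\le\frac{2s}{2s-1}\,N^{-(2s-1)}\le\frac{2s}{2s-1}\,N^{-(2s-d)},$$
using $N\ge1$ and $d\ge1$ for the last inequality; this produces the second fraction. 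Taking $N=n_{0}\ge m^{J}$ and invoking $2s-d>0$ (that is, $s>d/2$) to replace $N^{-(2s-d)}$ by $m^{-J(2s-d)}$, the two contributions combine into $\big[\tfrac{2s-d+1}{2s-d}+\tfrac{2s}{2s-1}\big]\,m^{-J(2s-d)}$.

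The delicate point — and the step I expect to be the main obstacle — is pinning down the explicit dimensional constant $d^{1+s-d}$: the shell-count constant $C_{d}$ (arising from the annulus measure and the expansion of $(n\pm\sqrt d/2)^{d}$) together with the radius normalization must be replaced by elementary, $\Gamma$–function–free quantities rather than the exact sphere–surface constant $\omega_{d-1}=2\pi^{d/2}/\Gamma(d/2)$. This is precisely where $m^{J}\ge d$ is used decisively, since it guarantees that $\sqrt d/2$ is dominated by the radius and lets each geometric correction be absorbed into a clean power of $d$. Convergence of all the tails throughout requires $2s-d>0$, i.e.\ $s>d/2$, which is the standing hypothesis and which simultaneously keeps $1/(2s-d)$ and $1/(2s-1)$ finite.
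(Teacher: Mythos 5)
Your route is genuinely different from the paper's. The paper never touches Euclidean shells: it first passes to the $\ell^{1}$-norm via the inclusion $\{\|j\|_{2}\geq m^{J}\}\subseteq\{\|j\|_{1}\geq m^{J}\}$ and the equivalence $\|\cdot\|_{2}\leq\|\cdot\|_{1}\leq\sqrt{d}\,\|\cdot\|_{2}$, then covers $\{\|j\|_{1}\geq m^{J}\}$ by $\bigcup_{k}\{|j_{k}|\geq m^{J}/d\}$, and collapses the resulting $d$-fold sum by applying the comparison $\sum_{n\geq N}(a+n)^{-\imath}\leq\frac{1}{\imath-1}(a+N-1)^{1-\imath}$ iteratively $d-1$ times; the two bracketed fractions arise there from two combinatorially distinct contributions (the full $d$-fold sum versus the pure axis sum $\sum_{j_{1}\geq\lfloor m^{J}/d\rfloor}j_{1}^{-2s}$), not from a bulk/residual split of a shell count. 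Your opening reduction (the factor $2^{2s}$ from $n\leq 2\|j\|_{2}$) and your one-dimensional tail estimates are correct, but the proof has a genuine gap exactly where you flag it: the shell-count bound $a_{n}\leq C_{d}n^{d-1}+(\text{residual})$ is never established with an explicit elementary $C_{d}$, the identification of the ``residual'' with something producing the $\frac{2s}{2s-1}$ term is not justified by the shell decomposition, and the assertion that the resulting constant is dominated by $d^{1+s-d}$ is precisely the content of the lemma and is left unproven. As written this is a sketch of a strategy, not a proof.

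You should also know why that last step resists you: the printed constant appears to be unobtainable, because the lemma as stated seems to be false. The paper's own final step uses $\|j\|_{2}^{-2s}\leq d^{-s}\|j\|_{1}^{-2s}$, but the norm equivalence $\|j\|_{1}\leq\sqrt{d}\,\|j\|_{2}$ gives $\|j\|_{2}^{-2s}\leq d^{s}\|j\|_{1}^{-2s}$, so the conversion factor should be $d^{+s}$, not $d^{-s}$, and the exponent of $d$ in \eqref{op3} should be $1+3s-d$ rather than $1+s-d$. A numerical check confirms the problem: for $d=2$, $s=1.01$, $m^{J}=2$, the left side of \eqref{op3} is approximately $2\pi\int_{2}^{\infty}r^{-1.02}\,dr\approx 3\times 10^{2}$, while the right side evaluates to about $2.1\times 10^{2}$, and the discrepancy grows rapidly with $d$. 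So neither your annulus-volume constant nor the paper's argument can reach $d^{1+s-d}$. What is true, and all that Theorem \ref{Theorem x2} actually needs, is a bound of the form $C(s,d)\,m^{-J(2s-d)}$ with some explicit constant; your approach does deliver that once you make the shell count explicit (for instance $a_{n}\leq V_{d}\big[(n+\sqrt{d}/2)^{d}-(n-1-\sqrt{d}/2)^{d}\big]$, using $m^{J}\geq d$ to control the expansion), but you should then state and carry the constant you actually obtain rather than the one printed in the lemma.
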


\begin{proof} We intend to give the upper bound of $\sum_{||j||_{1}\geq
m^{J}}||j||_{1}^{-2s}$, and then use the equivalence of the norms of
$\mathbb{R}^{d}$ to prove \eqref{op3}. It is easy to check that
\begin{align} \label{zihe}\big \{j\in \mathbb{Z}^{d}: ||j||_{1}\geq
m^{J}\big \}\subseteq \bigcup^{d}_{k=1}\big \{j=(j_{1}, j_{2},
\ldots, j_{d}): |j_{k}|\geq m^{J}/d, j_{\ell}\in \mathbb{Z},
\ell\neq k\big \}.\end{align} By \eqref{zihe},
\begin{align}\label{guji2}\begin{array}{lllll}
\displaystyle\sum_{||j||_{1}\geq
m^{J}}||j||_{1}^{-2s}&\displaystyle\leq
d\Big[\sum_{|j_{1}|\geq\lfloor m^{J}/d\rfloor}\sum_{j_{2}\in
\mathbb{Z}}\cdots \sum_{j_{d}\in
\mathbb{Z}}\frac{1}{(|j_{1}|+|j_{2}|+\ldots+|j_{d}|)^{2s}} \Big],
\end{array}
\end{align}
where $\lfloor x\rfloor$ denotes the largest integer that is not
larger than $x$.

Noticing  that its sums involved   have  nothing to do with the
signs of the components of $j$,  the upper bound in \eqref{guji2}
can be estimated as follows,
\begin{align}\label{guji3}\begin{array}{lllll}
\displaystyle d\sum_{|j_{1}|\geq\lfloor
m^{J}/d\rfloor}\sum_{j_{2}\in \mathbb{Z}}\cdots \sum_{j_{d}\in
\mathbb{Z}}\frac{1}{(|j_{1}|+|j_{2}|+\ldots+|j_{d}|)^{2s}}\\
\displaystyle\leq d2^{d}\Big[\sum^{\infty}_{j_{1}=\lfloor
m^{J}/d\rfloor}\sum^{\infty}_{j_{2}=1}\cdots
\sum^{\infty}_{j_{d}=1}\frac{1}{(j_{1}+j_{2}+\ldots+j_{d})^{2s}}
+\sum^{\infty}_{j_{1}=\lfloor m^{J}/d\rfloor}\frac{1}{j_{1}^{2s}}
\Big].
\end{array}
\end{align}

 For any $a>0 $, $N\geq1$ and $\imath>1$, it is easy
to check that
\begin{align}\label{guji4}
\sum^{\infty}_{n=N}\frac{1}{(a+n)^{\imath}}\leq\int^{\infty}_{N-1}\frac{1}{(a+x)^{\imath}}dx=\frac{1}{\imath-1}\frac{1}{(a+N-1)^{\imath-1}}.
\end{align}
Applying \eqref{guji4} for $d-1$ times when $N=1$, we obtain
\begin{align}\label{guji5}\begin{array}{lllll}
\displaystyle\sum^{\infty}_{j_{1}=\lfloor
m^{J}/d\rfloor}\sum^{\infty}_{j_{2}=1}\cdots
\sum^{\infty}_{j_{d}=1}\frac{1}{(j_{1}+j_{2}+\ldots+j_{d})^{2s}}
\leq \prod^{d-1}_{l=1}\frac{1}{2s-l}\sum^{\infty}_{j_{1}=\lfloor
m^{J}/d\rfloor}\frac{1}{j_{1}^{2s-d+1}}.
\end{array}
\end{align}
Using \eqref{guji4} again, we have
\begin{align}\label{guji5x}\begin{array}{lllll}
\displaystyle \sum^{\infty}_{j_{1}=\lfloor
m^{J}/d\rfloor}\frac{1}{j_{1}^{2s-d+1}}&\displaystyle=\sum^{\infty}_{j_{1}=\lfloor
m^{J}/d\rfloor+1}\frac{1}{j_{1}^{2s-d+1}}+\frac{1}{\lfloor
m^{J}/d\rfloor^{2s-d+1}}\\
&\displaystyle\leq \frac{1}{\lfloor
m^{J}/d\rfloor^{2s-d}}\big(\frac{1}{2s-d}+\frac{1}{\lfloor
m^{J}/d\rfloor}\big).
\end{array}
\end{align}
Similarly, \begin{align}\label{guji6}\begin{array}{lllll}
\displaystyle \sum^{\infty}_{j_{1}=\lfloor
m^{J}/d\rfloor}\frac{1}{j_{1}^{2s}}\leq \frac{1}{\lfloor
m^{J}/d\rfloor^{2s-1}}\frac{1}{2s-1}+\frac{1}{\lfloor
m^{J}/d\rfloor^{2s}}.
\end{array}
\end{align}
 Combining \eqref{guji2}, \eqref{guji3}, \eqref{guji5},
 \eqref{guji5x} and \eqref{guji6},  we have
\begin{align}\label{guji1}\begin{array}{lllll}
\displaystyle\sum_{||j||_{1}\geq m^{J}}||j||_{1}^{-2s}
&\displaystyle\leq
d2^{d}\Big[\prod^{d-1}_{l=1}\frac{1}{2s-l}\big(\frac{1}{\lfloor
m^{J}/d\rfloor^{2s-d}}\frac{1}{2s-d}+\frac{1}{\lfloor
m^{J}/d\rfloor^{2s-d+1}}\big)\\
& \displaystyle \quad \quad \quad  +\frac{1}{\lfloor
m^{J}/d\rfloor^{2s-1}}\frac{1}{2s-1}+\frac{1}{\lfloor
m^{J}/d\rfloor^{2s}}\Big]\\
&\displaystyle\leq
d2^{d}\Big[\prod^{d-1}_{l=1}\frac{1}{2s-l}\frac{1}{\lfloor
m^{J}/d\rfloor^{2s-d}}\frac{2s-d+1}{2s-d}+\frac{1}{\lfloor
m^{J}/d\rfloor^{2s-1}}\frac{2s}{2s-1}\Big]\\
&\displaystyle\leq d2^{d}\frac{1}{\lfloor
m^{J}/d\rfloor^{2s-d}}\Big[\frac{2s-d+1}{2s-d}+\frac{2s}{2s-1}\Big]\\
&\displaystyle\leq
d^{1+2s-d}2^{2s}\Big[\frac{2s-d+1}{2s-d}+\frac{2s}{2s-1}\Big]m^{-J(2s-d)}.
\end{array}
\end{align}
From
$$||\cdot||_{2}\leq ||\cdot||_{1}\leq\sqrt{d}||\cdot||_{2},$$ we
arrive at
\begin{align} \label{jck}\sum_{||j||_{2}\geq m^{J}}||j||_{2}^{-2s}\leq \sum_{||j||_{1}\geq m^{J}}||j||_{2}^{-2s}\leq d^{-s}\sum_{||j||_{1}\geq m^{J}}||j||_{1}^{-2s}.\end{align}
Now by \eqref{guji1} and \eqref{jck},  the proof of \eqref{op3} can
be concluded.
\end{proof}

Suppose that  $\varepsilon$ is  any $\lambda$-clustered sequence
defined in \eqref{ghbv}. It can be decomposed into a sequence
$\theta$ in $l^{\alpha}(\mathbb{Z}^{d})$ and a constant sequence
$\{\lambda\}$. The procedures for estimating
$||(I-\mathcal{S}_{\phi; \varepsilon}^{N})f||_{2}$ are sketched as
follows. In Theorem \ref{Theorem x2} we  estimate
$||(I-\mathcal{S}_{\phi; \theta}^{N})f||_{2}$ for the perturbation
sequence $\theta\in l^{\alpha}(\mathbb{Z}^{d})$. Then in Lemma
\ref{Theorem x3}, the error
$||\big(I-\mathcal{S}_{\phi;\theta}^{N}\big)(f-f(\cdot+M^{-N}\lambda))||_{2}$
for any $\lambda\in\mathbb{R}^{d}$ is estimated. Having the two
error estimations above, we estimate $||(I-\mathcal{S}_{\phi;
\varepsilon}^{N})f||_{2}$ in Theorem \ref{Theorem xe4}.

\begin{theo}\label{Theorem x2}
Let $\phi\in H^{s}(\mathbb{R}^{d})$ and $\widetilde{\phi}\in
H^{-s}(\mathbb{R}^{d})$ be  both $M$-refinable (where $s>d/2$) such
that
$||\widehat{\widetilde{\phi}}||_{L^{\infty}(\mathbb{R}^{d})}<\infty$.
Suppose that $\theta_{N}:=\{\theta_{N, k}\}_{k\in\mathbb{Z}^{d}}\in
l^{\alpha}(\mathbb{Z}^{d})$, where $0<\alpha<\min\{2s-d, 2\}$. Then
for any $f\in H^{s}(\mathbb{R}^{d})$ and $N\geq
\frac{2s+2-\alpha}{2-\alpha}\log_{m}d$,  there exists a positive
constant $C_{2}(s,\alpha)$ such that
\begin{align}\label{raodong1}\begin{array}{lllll}   ||(I-\mathcal{S}_{\phi; \theta_{N}}^{N})f||_{2}
\leq ||(I-\mathcal{S}_{\phi}^{N})f||_{2}+C_{2}(s,
\alpha)||f||_{H^{s}(\mathbb{R}^{d})}||\theta_{N}||_{\hbox{m}}
m^{-N(\frac{4s+(\alpha-2)d}{2s-\alpha+2}+d)/2},
\end{array}\end{align}
where
\begin{align}\label{fszd} ||\theta_{N}||_{\hbox{m}}:=\max\big\{||\theta_{N}||_{l^{2}(\mathbb{Z}^{d})},
||\theta_{N}||^{\alpha/2}_{l^{\alpha}(\mathbb{Z}^{d})}\big\}\end{align}
with $||\theta_{N}||_{l^{2}(\mathbb{Z}^{d})}$  defined via
\eqref{fanshu} with $\alpha$ being replaced by $2$.
\end{theo}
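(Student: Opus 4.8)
The plan is to isolate the perturbation by the triangle inequality,
$||(I-\mathcal{S}_{\phi; \theta_{N}}^{N})f||_{2}\leq ||(I-\mathcal{S}_{\phi}^{N})f||_{2}+||(\mathcal{S}_{\phi}^{N}-\mathcal{S}_{\phi; \theta_{N}}^{N})f||_{2}$,
so that only the second, genuinely perturbative, term has to be bounded by $C_{2}(s,\alpha)||f||_{H^{s}(\mathbb{R}^{d})}||\theta_{N}||_{\mathrm{m}}m^{-N(\cdots)}$. Writing $\psi_{N,k}:=\widetilde{\phi}^{-s}_{N,k}-m^{N(d/2+s)}\widetilde{\phi}(M^{N}\cdot-k-\theta_{N,k})$, the difference operator is $\sum_{k}\langle f,\psi_{N,k}\rangle\phi^{s}_{N,k}$. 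First I would record that $\{\phi^{s}_{N,k}\}_{k}$ is a Bessel system in $L^{2}(\mathbb{R}^{d})$: since $\phi\in H^{s}(\mathbb{R}^{d})\subset L^{2}(\mathbb{R}^{d})$ and (as noted after \eqref{opq}) its bracket product lies in $L^{\infty}(\mathbb{R}^{d})$, a change of variables gives $||\sum_{k}c_{k}\phi^{s}_{N,k}||_{2}\leq m^{-Ns}\sqrt{B_{\phi}}\,||c||_{l^{2}}$ with a Bessel bound $B_{\phi}$ independent of $N$. This reduces the task to estimating the sequence norm $\big(\sum_{k}|\langle f,\psi_{N,k}\rangle|^{2}\big)^{1/2}$, and since the net scale factor is $m^{-Ns}\cdot m^{N(s+d/2)}=m^{Nd/2}$, the target rate will follow once this sequence norm is shown to beat $m^{-N(\frac{4s+(\alpha-2)d}{2s-\alpha+2}+d)/2-Nd/2}$.

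Next I would compute the coefficients on the Fourier side. Using the pairing $\langle f,g\rangle=(2\pi)^{-d}\int\widehat{f}\,\overline{\widehat{g}}\,d\xi$ and the substitution $\eta=(M^{T})^{-N}\xi$, a direct calculation gives $\langle f,\psi_{N,k}\rangle=m^{N(s+d/2)}\big(g(k)-g(k+\theta_{N,k})\big)$, where $g$ is the function with $\widehat{g}(\eta)=\widehat{f}((M^{T})^{N}\eta)\overline{\widehat{\widetilde{\phi}}(\eta)}=:G(\eta)$; equivalently, after periodizing over $2\pi\mathbb{Z}^{d}$ and using $e^{ik\cdot2\pi n}=1$, the quantity $g(k)-g(k+\theta_{N,k})$ is, up to a constant, the $k$-th Fourier coefficient of $H_{k}(\zeta):=\sum_{n\in\mathbb{Z}^{d}}G(\zeta+2\pi n)\big(1-e^{i\theta_{N,k}\cdot(\zeta+2\pi n)}\big)$. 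The hypothesis $||\widehat{\widetilde{\phi}}||_{L^{\infty}(\mathbb{R}^{d})}<\infty$ enters through $|G(\eta)|\leq ||\widehat{\widetilde{\phi}}||_{L^{\infty}(\mathbb{R}^{d})}\,|\widehat{f}((M^{T})^{N}\eta)|$, and the isotropy of $M$ through $||(M^{T})^{-N}\xi||_{2}\asymp m^{-N}||\xi||_{2}$, which converts every factor $||\eta||_{2}$ appearing inside the integral into a gain $m^{-N}||\xi||_{2}$ after rescaling back to $\xi$.

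The core is then the bound $\sum_{k}|g(k)-g(k+\theta_{N,k})|^{2}\lesssim ||f||_{H^{s}(\mathbb{R}^{d})}^{2}||\theta_{N}||_{\mathrm{m}}^{2}m^{-N(\cdots)}$, for which I would estimate each coefficient by $|g(k)-g(k+\theta_{N,k})|^{2}\leq (2\pi)^{-d}||H_{k}||_{L^{2}([0,2\pi)^{d})}^{2}$ and, inside $H_{k}$, use $|1-e^{it}|\leq\min\{2,|t|\}$ together with the Cauchy--Schwarz splitting $\sum_{n}|G|\,|1-e^{i\theta_{N,k}\cdot\eta}|\leq\big(\sum_{n}|G|^{2}|1-e^{i\theta_{N,k}\cdot\eta}|^{2}(1+||\eta||_{2}^{2})^{\sigma}\big)^{1/2}\big(\sum_{n}(1+||\eta||_{2}^{2})^{-\sigma}\big)^{1/2}$ with $\sigma>d/2$. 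The second factor is finite precisely because $s>d/2$, and its tail beyond a cutoff is controlled by Lemma \ref{Lemma Xr}, whose hypothesis $J\geq\log_{m}d$ forces the lower bound $N\geq\frac{2s+2-\alpha}{2-\alpha}\log_{m}d$. I would then split the shifts into a small regime (using the linear bound $|1-e^{it}|\leq|t|$, producing $\sum_{k}||\theta_{N,k}||_{2}^{2}=||\theta_{N}||_{l^{2}(\mathbb{Z}^{d})}^{2}$) and a large regime (using $|1-e^{it}|\leq2$ over the finitely many $k$ with $\#\{k:||\theta_{N,k}||_{2}>T\}\leq ||\theta_{N}||_{l^{\alpha}(\mathbb{Z}^{d})}^{\alpha}$, producing $||\theta_{N}||_{l^{\alpha}(\mathbb{Z}^{d})}^{\alpha/2}$); the maximum of the two contributions is $||\theta_{N}||_{\mathrm{m}}$. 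Optimizing a frequency cutoff $R=m^{N\nu}$ to balance the low-frequency piece (carrying the $m^{-N}$ gains from isotropic rescaling) against the high-frequency tail from Lemma \ref{Lemma Xr}, exactly as in the proof of Lemma \ref{Lemma 2.1}, yields the exponent $\frac{4s+(\alpha-2)d}{2s-\alpha+2}$.

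\textbf{Main obstacle.} Because $\theta_{N,k}$ varies with $k$, the perturbed samples $g(k+\theta_{N,k})$ do not assemble into a single periodization, so Parseval cannot be applied exactly to the sequence $\{g(k)-g(k+\theta_{N,k})\}_{k}$; the lossy per-coefficient bound $|\cdot|^{2}\leq(2\pi)^{-d}||H_{k}||_{L^{2}}^{2}$ is what makes the argument close, and its summability over $k$ is rescued only by the $\theta_{N,k}$-dependent factors. The genuinely delicate point is that the linear bound costs one derivative, which is unavailable when $s$ is near $d/2$ (only $f\in H^{s}(\mathbb{R}^{d})$ is assumed, not $H^{s+1}(\mathbb{R}^{d})$); the remedy is the optimized cutoff $R=m^{N\nu}$ balancing the low- and high-frequency pieces rather than a crude derivative count, and arranging this balance so that the two shift regimes produce the \emph{same} sharp rate $\frac{4s+(\alpha-2)d}{2s-\alpha+2}$ is the crux of the proof. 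As a byproduct, the finiteness of the resulting bound shows that $\mathcal{S}_{\phi;\theta_{N}}^{N}f\in L^{2}(\mathbb{R}^{d})$.
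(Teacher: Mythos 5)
Your overall architecture coincides with the paper's: triangle inequality to isolate $\|(\mathcal{S}_{\phi}^{N}-\mathcal{S}_{\phi;\theta_{N}}^{N})f\|_{2}$, the Bessel bound $\|\sum_{k}C_{k}\phi(\cdot-k)\|_{2}^{2}\leq\|[\widehat{\phi},\widehat{\phi}]_{0}\|_{L^{\infty}}\sum_{k}|C_{k}|^{2}$ to reduce everything to the coefficient sequence (with the $m^{\pm Ns}$ factors cancelling exactly as you note), a Fourier-side Cauchy--Schwarz that extracts $\|f\|_{H^{s}(\mathbb{R}^{d})}$ and $\|\widehat{\widetilde{\phi}}\|_{L^{\infty}(\mathbb{R}^{d})}$ and leaves the $f$-free integral $\int_{\mathbb{R}^{d}}(1+\|\xi\|_{2}^{2})^{-s}|1-e^{i(M^{T})^{-N}\theta_{N,k}\cdot\xi}|^{2}\,d\xi$, periodization plus Lemma \ref{Lemma Xr} for the lattice tail (whence the condition $N\geq\frac{2s+2-\alpha}{2-\alpha}\log_{m}d$, which you correctly trace to $m^{J}\geq d$), and a cutoff optimization $J\sim N$. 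The paper simply applies the plain Cauchy--Schwarz \emph{before} periodizing, so your weighted splitting with an auxiliary exponent $\sigma>d/2$ is an avoidable complication rather than an error.

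The genuine gap is in the mechanism you propose for producing $\|\theta_{N}\|_{\hbox{m}}=\max\{\|\theta_{N}\|_{l^{2}},\|\theta_{N}\|_{l^{\alpha}}^{\alpha/2}\}$, namely a dichotomy on the size of the individual shifts, with the trivial bound $|1-e^{it}|\leq 2$ on the ``large'' $\theta_{N,k}$ and a counting argument. That bound discards the factor $(M^{T})^{-N}$ entirely, so the large-shift terms contribute $O(\#\{k\}\cdot\int(1+\|\xi\|_{2}^{2})^{-s}d\xi)$ with \emph{no} decay in $N$; no choice of frequency cutoff rescues this, since the low-frequency block of that integral is already an $N$-independent constant. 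Consequently your scheme cannot yield the rate $m^{-N(\frac{4s+(\alpha-2)d}{2s-\alpha+2}+d)/2}$, and you in effect concede this by calling the balancing of the two shift regimes ``the crux'' without supplying it. The paper's mechanism is different and is the actual crux: the exponent $\alpha$ enters through the pointwise inequality $|1-e^{it}|^{2}=4|\sin(t/2)|^{2}\leq 4|\sin(t/2)|^{\alpha}\leq 2^{2-\alpha}|t|^{\alpha}$, applied \emph{uniformly in $k$} but only on the high-frequency annuli $\|j\|_{2}\geq m^{J}$. This single step simultaneously supplies the gain $m^{-N\alpha}\|\theta_{N,k}\|_{2}^{\alpha}$ (hence, after summing in $k$, the $\|\theta_{N}\|_{l^{\alpha}}^{\alpha}$ term) and weakens the lattice decay from $\|j\|_{2}^{-2s}$ to $\|j\|_{2}^{-2s+\alpha}$, which Lemma \ref{Lemma Xr} can still sum precisely because $\alpha<2s-d$ --- that hypothesis plays no role in your version. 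The quadratic bound is used only on the finitely many low-frequency cells, producing $\|\theta_{N,k}\|_{2}^{2}m^{-2N+(2+d)J}$, and equating $J(2s-\alpha-d)+N\alpha$ with $2N-(2+d)J$ gives $J=\frac{2-\alpha}{2s+2-\alpha}N$ and the stated exponent. Without this uniform-in-$k$ interpolation between the exponents $2$ and $\alpha$ on the frequency side, your argument does not close.
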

\begin{proof}
By the triangle inequality, we just need to find a positive constant
$C_{2}(s,\alpha)$ such that
\begin{align}\label{xiangdui05}\begin{array}{lllll}   ||(\mathcal{S}_{\phi}^{N}-\mathcal{S}_{\phi; \theta_{N}}^{N})f||_{2}\leq C_{2}(s, \alpha)||f||_{H^{s}(\mathbb{R}^{d})}
||\theta_{N}||_{\hbox{m}}m^{-N(\frac{4s+(\alpha-2)d}{2s-\alpha+2}+d)/2}.
\end{array}\end{align}
By direct computation, we get
\begin{align}\begin{array}{lllll} \label{split}
\displaystyle\big|\langle f, m^{Nd/2}\widetilde{\phi}(M^{N}\cdot-k)
-m^{Nd/2}\widetilde{\phi}(M^{N}\cdot-k-\theta_{N, k})
\rangle\big|^{2}\\
=\displaystyle \frac{m^{-Nd}}{(2\pi)^{2d}} \Big|\int_{\mathbb{R}^{d}}\widehat{f}(\xi)\overline{\widehat{\widetilde{\phi}}\big((M^{T})^{-N}\xi\big)}e^{i(M^{T})^{-N}k\cdot\xi}(1-e^{i(M^{T})^{-N}\theta_{N, k}\cdot\xi})d\xi\Big|^{2}\\
=\displaystyle \frac{m^{-Nd}}{(2\pi)^{2d}} \Big|\int_{\mathbb{R}^{d}}\widehat{f}(\xi)(1+||\xi||_{2}^{2})^{s/2}\overline{\widehat{\widetilde{\phi}}\big((M^{T})^{-N}\xi\big)}(1+||\xi||_{2}^{2})^{-s/2}e^{i(M^{T})^{-N}k\cdot\xi}(1-e^{i(M^{T})^{-N}\theta_{N, k}\cdot\xi})d\xi\Big|^{2}\\
\leq
\displaystyle\frac{m^{-Nd}}{(2\pi)^{d}}||f||^{2}_{H^{s}(\mathbb{R}^{d})}||\widehat{\widetilde{\phi}}||^{2}_{L^{\infty}(\mathbb{R}^{d})}
\displaystyle\int_{\mathbb{R}^{d}}(1+||\xi||_{2}^{2})^{-s}|1-e^{i(M^{T})^{-N}\theta_{N, k}\cdot\xi}|^{2}d\xi\\
=\displaystyle
\frac{m^{-Nd}}{(2\pi)^{d}}||f||^{2}_{H^{s}(\mathbb{R}^{d})}||\widehat{\widetilde{\phi}}||^{2}_{L^{\infty}(\mathbb{R}^{d})}\big(I_{1}(J)+I_{2}(J)\big),\end{array}
\end{align}
where $ I_{1}(J)=\sum_{||j||_{2}\geq
m^{J}}\int_{\mathbb{T}^{d}}(1+||\xi+2j\pi||_{2}^{2})^{-s}|1-e^{i(M^{T})^{-N}\theta_{N,k}\cdot(\xi+2j\pi)}|^{2}d\xi,
$ and $ I_{2}(J)=\sum_{||j||_{2}<
m^{J}}\int_{\mathbb{T}^{d}}(1+||\xi+2j\pi||_{2}^{2})^{-s}|1-e^{i(M^{T})^{-N}\theta_{N,
k}\cdot(\xi+2j\pi)}|^{2}d\xi $ with $\mathbb{T}^{d}:=[0,2\pi)^{d}$,
and $J (>0)$ to be optimally selected.
 The
two quantities $I_{1}(J)$ and $I_{2}(J)$ are estimated as follows,
\begin{align}\label{cet6}\begin{array}{lllll}
\displaystyle I_{1}(J)&=\displaystyle\sum_{||j||_{2}\geq
m^{J}}\int_{\mathbb{T}^{d}}(1+||\xi+2j\pi||_{2}^{2})^{-s}|1-e^{i(M^{T})^{-N}\theta_{N, k}\cdot(\xi+2j\pi)}|^{2}d\xi\\
&=\displaystyle4\sum_{||j||_{2}\geq
m^{J}}\int_{\mathbb{T}^{d}}(1+||\xi+2j\pi||_{2}^{2})^{-s}\big|\sin\big((M^{T})^{-N}\theta_{N, k}\cdot(\xi+2j\pi)/2\big)\big|^{2}d\xi\\
&\leq \displaystyle4\sum_{||j||_{2}\geq
m^{J}}\int_{\mathbb{T}^{d}}(1+||\xi+2j\pi||_{2}^{2})^{-s}\big|\sin\big((M^{T})^{-N}\theta_{N, k}\cdot(\xi+2j\pi)/2\big)\big|^{\alpha}d\xi\\
&\leq \displaystyle4||(M^{T})^{-N}\theta_{N,
k}||^{\alpha}_{2}\sum_{||j||_{2}\geq
m^{J}}\int_{\mathbb{T}^{d}}(1+||\xi+2j\pi||_{2}^{2})^{-s}||(\xi+2j\pi)/2||^{\alpha}_{2}d\xi\\
&\leq \displaystyle 4||(M^{T})^{-N}\theta_{N,
k}||^{\alpha}_{2}\pi^{\alpha}\sum_{||j||_{2}\geq
m^{J}}(\sqrt{d}+||j||_{2})^{\alpha}\int_{\mathbb{T}^{d}}(1+||\xi+2j\pi||_{2}^{2})^{-s}d\xi\\
&\leq \displaystyle 4||(M^{T})^{-N}\theta_{N,
k}||^{\alpha}_{2}\pi^{\alpha}(2\pi)^{d}\sum_{||j||_{2}\geq
m^{J}}(\sqrt{d}+||j||_{2})^{\alpha}\big[1+(2\pi)^{2}(||j||_{2}-\sqrt{d})^{2}\big]^{-s}\\
&\leq \displaystyle 4||(M^{T})^{-N}\theta_{N,
k}||^{\alpha}_{2}\pi^{\alpha}(2\pi)^{d-2s}2^{2s+\alpha}\sum_{||j||_{2}\geq
m^{J}}||j||^{-2(s-\alpha/2)}_{2}\\
&\leq \displaystyle
(2\pi)^{\alpha+d-2s}2^{4s+2}d^{1+s-d}\Big[\frac{2s-d+1}{2s-d}+\frac{2s}{2s-1}\Big]||\theta_{N,
k}||^{\alpha}_{2}m^{-[J(2s-\alpha-d)+N\alpha]},
\end{array}
\end{align}
where the second inequality is derived  from $\alpha\leq2$, and the
last one from \eqref{op3}. The  quantity $I_{2}$ is estimated as
follows,
\begin{align}\label{di2bufen}\begin{array}{lllll}
\displaystyle I_{2}(J)&\leq\displaystyle\sum_{||j||_{2}<
m^{J}}\int_{\mathbb{T}^{d}}(1+||\xi+2j\pi||_{2}^{2})^{-s}|1-e^{i(M^{T})^{-N}\theta_{N, k}\cdot(\xi+2j\pi)}|^{2}d\xi\\
&\leq \displaystyle (2\pi)^{d}\sum_{||j||_{2}<
m^{J}}\max_{\xi\in [0,2\pi]^{d}}|1-e^{i(M^{T})^{-N}\theta_{N, k}\cdot(\xi+2j\pi)}|^{2}\\
& \leq 4||\theta_{N, k}||_{2}^{2}(2\pi)^{2d+2}m^{-2N+(2+d)J}.
\end{array}
\end{align}
 That is, $I_{1}(J)=\mbox{O}(m^{-[J(2s-\alpha-d)+N\alpha]})$ and
$I_{2}(J)=\mbox{O}(m^{-2N+(2+d)J})$. Therefore,
\begin{align}\label{laibo}
I_{1}(J)+I_{2}(J)=\mbox{O}\big(m^{-\min\{J(2s-\alpha-d)+N\alpha, \
2N-(2+d)J\}}\big).
\end{align}
It is easy to check that if choosing
$J=\frac{2-\alpha}{2s+2-\alpha}N$, then the approximation order in
\eqref{laibo} is optimal. Incidentally, by Lemma \ref{Lemma Xr}, the
condition for the last inequality of \eqref{cet6} is $m^{J}\geq d$.
Therefore, by  $N\geq \frac{2s+2-\alpha}{2-\alpha}\log_{m}d$, the
choice for $J=\frac{2-\alpha}{2s+2-\alpha}N$ is feasible. Now for
this choice, we have
\begin{align}\label{laibo1}
I_{1}(J)+I_{2}(J)=\mbox{O}\big(m^{-N\frac{4s+(\alpha-2)d}{2s-\alpha+2}}\big).
\end{align}
Summarizing \eqref{split}, \eqref{cet6},
 \eqref{di2bufen} and \eqref{laibo1}, we obtain
 \begin{align}\label{yut}\begin{array}{lllll}\displaystyle
|\langle f, m^{Nd/2}\widetilde{\phi}(M^{N}\cdot-k)
-m^{Nd/2}\widetilde{\phi}(M^{N}\cdot-k-\theta_{N, k})\rangle|^{2}\\
\displaystyle\leq
\frac{m^{-Nd}}{(2\pi)^{d-2}}||f||^{2}_{H^{s}(\mathbb{R}^{d})}||\widehat{\widetilde{\phi}}||^{2}_{L^{\infty}(\mathbb{R}^{d})}\Big(
C_{3}(s,\alpha)||\theta_{N,
k}||^{\alpha}_{2}+4(2\pi)^{2d}||\theta_{N,
k}||_{2}^{2}\Big)m^{-N\frac{4s+(\alpha-2)d}{2s-\alpha+2}},
\end{array}\end{align}
where
$$C_{3}(s,\alpha)=(2\pi)^{\alpha-2+d-2s}2^{4s+2}d^{1+s-d}\Big[\frac{2s-d+1}{2s-d}+\frac{2s}{2s-1}\Big].$$
 On the other hand, for any sequence $\{C_{k}\}\in
l^{2}(\mathbb{Z}^{d})$, we have
\begin{align}\begin{array}{lllll} \label{cet7} \displaystyle
\displaystyle||\sum_{k\in\mathbb{Z}^{d}}C_{k}\phi(\cdot-k)||^{2}_{2}&=(2\pi)^{-d}\displaystyle\int_{\mathbb{R}^{d}}|\sum_{k\in\mathbb{Z}^{d}}C_{k}e^{ik\cdot\xi}|^{2}|\widehat{\phi}(\xi)|^{2}d\xi\\
&=\displaystyle\displaystyle(2\pi)^{-d}\int_{\mathbb{T}^{d}}|\sum_{k\in\mathbb{Z}^{d}}C_{k}e^{ik\xi}|^{2}\sum_{\ell\in\mathbb{Z}^{d}}|\widehat{\phi}(\xi+2\ell\pi)|^{2}d\xi\\
&\displaystyle\leq||[\widehat{\phi},\widehat{\phi}]_{0}||_{L^{\infty}(\mathbb{T}^{d})}\sum_{k\in\mathbb{Z}^{d}}|C_{k}|^{2},
\end{array}
\end{align}
where the bracket product
$||[\widehat{\phi},\widehat{\phi}]_{0}||_{L^{\infty}(\mathbb{T}^{d})}$
is defined in \eqref{opq}.
 Then from \eqref{cet7}  and  \eqref{yut}, we arrive at
\begin{align}\label{ghuilai}\begin{array}{lllll} ||(\mathcal{S}_{\phi}^{N}-\mathcal{S}_{\phi; \theta_{N}}^{N})f||^{2}_{2}\\
=\displaystyle||\sum_{k\in \mathbb{Z}^{d}} \langle f,
m^{N(d/2+s)}\widetilde{\phi}(M^{N}\cdot-k)\rangle
\phi^{s}_{N,k}-\sum_{k\in \mathbb{Z}^{d}} \langle f,
m^{N(d/2-s)}\widetilde{\phi}(M^{N}\cdot-k-\theta_{N, k})\rangle
\phi^{s}_{N,k}||_{2}^{2} \\
 =\displaystyle||\sum_{k\in \mathbb{Z}^{d}} \langle f,
m^{Nd/2}\widetilde{\phi}(M^{N}\cdot-k)-m^{Nd/2}\widetilde{\phi}(M^{N}\cdot-k-\theta_{N,
k})\rangle
\phi_{N,k}||_{2}^{2} \\
 \displaystyle\leq
||[\widehat{\phi},\widehat{\phi}]_{0}||_{L^{\infty}(\mathbb{T}^{d})}\sum_{k\in
\mathbb{Z}^{d}}|\langle f,m^{Nd/2}\widetilde{\phi}(M^{N}\cdot-k)
-m^{Nd/2}\widetilde{\phi}(M^{N}\cdot-k-\theta_{N, k}) \rangle|^{2}\\
\leq
\displaystyle\frac{m^{-Nd}}{(2\pi)^{d-2}}||f||^{2}_{H^{s}(\mathbb{R}^{d})}||\widehat{\widetilde{\phi}}||^{2}_{L^{\infty}(\mathbb{R}^{d})}||[\widehat{\phi},\widehat{\phi}]_{0}||_{L^{\infty}(\mathbb{T}^{d})}\Big(
C_{3}(s,\alpha)||\theta_{N}||^{\alpha}_{l^{\alpha}(\mathbb{Z}^{d})}+||\theta_{N}||_{l^{2}(\mathbb{Z}^{d})}^{2}\Big)m^{-N\frac{4s+(\alpha-2)d}{(2s-\alpha+2)}}\\
\leq
\displaystyle\frac{||f||^{2}_{H^{s}(\mathbb{R}^{d})}}{(2\pi)^{d-2}}||\widehat{\widetilde{\phi}}||^{2}_{L^{\infty}(\mathbb{R}^{d})}||[\widehat{\phi},\widehat{\phi}]_{0}||_{L^{\infty}(\mathbb{T}^{d})}\Big(
C_{3}(s,\alpha)+4(2\pi)^{2d}\Big)
||\theta_{N}||_{\hbox{m}}m^{-N[\frac{4s+(\alpha-2)d}{2s-\alpha+2}+d]},
\end{array}\end{align}
where $\phi_{N,k}=m^{Nd/2}\phi(M^{N}\cdot-k),$ and
$||\theta_{N}||_{\hbox{m}}$ is defined in \eqref{fszd}.
 Now we select
\begin{align}\label{C2} C_{2}(s, \alpha):=||\widehat{\widetilde{\phi}}||_{L^{\infty}(\mathbb{R}^{d})}\sqrt{\frac{||[\widehat{\phi},\widehat{\phi}]_{0}||_{L^{\infty}(\mathbb{T}^{d})}}{(2\pi)^{d-2}}\Big(
C_{3}(s,\alpha)+4(2\pi)^{2d}\Big)}\end{align} to conclude the proof
of \eqref{xiangdui05}.
\end{proof}

\begin{rem}\label{lubax}
(I) By the perturbation estimate  in Theorem \ref{Theorem x2}
\eqref{raodong1}, the approximation $\mathcal{S}_{\phi}^{N}f$ of $f$
is robust to the perturbation sequence $\theta_{N}$. Moreover, if
\begin{align}\nonumber \max\big(||\theta_{N}||^{\alpha}_{l^{\alpha}(\mathbb{Z}^{d})},
||\theta_{N}||_{l^{2}(\mathbb{Z}^{d})}^{2}\big)=\hbox{o}(m^{N\gamma})\end{align}
where $\gamma<\frac{4s+(\alpha-2)d}{2s-\alpha+2}+d$, then
$\displaystyle\lim_{N\rightarrow\infty}\mathcal{S}_{\phi,
\theta_{N}}^{N}f=f$ in the sense of $||\cdot||_{2}$. In other words,
as $N$ increases, so does the capability for anti-perturbation of
$\mathcal{S}_{\phi}^{N}$.

(II) The  quantity $I_{1}(J)$ can be bounded in Theorem \ref{Theorem
x2} \eqref{cet6} provided that $s>d/2.$ In this sense, the
robustness of $\mathcal {S}_{\phi}^{N}f$ to perturbation is closely
related  to the condition $s>d/2$. As mentioned in Section
\ref{section1}, the condition will be  crucial for our construction
of nonuniform sampling approximation.

 (III)  By
Theorem \ref{Theorem x1} \eqref{bound1},
$\displaystyle\lim_{N\rightarrow
\infty}||f-\mathcal{S}_{\phi}^{N}f||_{H^{s}(\mathbb{R}^{d})}=0$.
However, due to
$$\displaystyle\lim_{N\rightarrow\infty}||\phi_{N,k}||_{H^{s}(\mathbb{R}^{d})}=\displaystyle\lim_{N\rightarrow\infty}||m^{Nd/2}\phi(M^{N}\cdot-k)||_{H^{s}(\mathbb{R}^{d})}=+\infty,$$
the conditions in Theorem \ref{Theorem x2}  can not guarantee that
$\displaystyle\lim_{N\rightarrow
\infty}||(\mathcal{S}_{\phi}^{N}-\mathcal{S}_{\phi;
\theta_{N}}^{N})f||_{H^{s}(\mathbb{R}^{d})}=0$ nor
$\displaystyle\lim_{N\rightarrow \infty}||f-\mathcal{S}_{\phi;
\theta_{N}}^{N}f||_{H^{s}(\mathbb{R}^{d})}=0$.

\end{rem}


\begin{lem}\label{Theorem x3} Let $s>d/2$.  The sequence
$\theta_{N}$ belongs to $ l^{\alpha}(\mathbb{Z}^{d})$, where
$0<\alpha<\min\{2s-d, 2\}$. Suppose that the two $M$-refinable
functions  $\widetilde{\phi}\in H^{-s}(\mathbb{R}^{d})$ and $\phi\in
H^{s}(\mathbb{R}^{d})$ are  as in Theorem \ref{Theorem x1}.
Moreover, $\widetilde{\phi}\in H^{-t}(\mathbb{R}^{d})$ and $\phi\in
H^{\varsigma}(\mathbb{R}^{d})$, where $d/2<t<s<\varsigma$. Assume
that $N\geq \frac{2s+2-\alpha}{2-\alpha}\log_{m}d$ is arbitrary.
Then there exists $\widetilde{C}_{2}
>0$ (being independent of $N$) such that for every $ f\in
H^{\varsigma}(\mathbb{R}^{d})$ and $\lambda_{N} \in \mathbb{R}^{d}$,
it holds
\begin{align}
\begin{array}{lllll} \label{ghgvb}
||\big(I-\mathcal{S}_{\phi;\theta_{N}}^{N}\big)(f-f(\cdot+M^{-N}\lambda_{N}))||_{2}
\leq
\widetilde{C}_{2}||f||_{H^{\varsigma}(\mathbb{R}^{d})}(1+||\theta_{N}||_{\hbox{m}})||\lambda_{N}||_{2}^{\zeta}m^{-N\zeta},
\end{array}\end{align}
where
$||\theta_{N}||_{\hbox{m}}=\max\{||\theta_{N}||_{l^{2}(\mathbb{Z}^{d})},
||\theta_{N}||^{\alpha/2}_{l^{\alpha}(\mathbb{Z}^{d})}\}$  and $
\zeta=\min\big\{\varsigma-s,
1,(\frac{4s+(\alpha-2)d}{2s-\alpha+2}+d)/2 \big\} $.
\end{lem}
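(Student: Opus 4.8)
The plan is to reduce the whole estimate to the single shift difference $g:=f-f(\cdot+M^{-N}\lambda_{N})$ and to exploit that $g$ has a small $H^{s}$-norm carrying the factor $\|\lambda_{N}\|_{2}^{\zeta}m^{-N\zeta}$. First I would record $\widehat{g}(\xi)=\widehat{f}(\xi)\bigl(1-e^{iM^{-N}\lambda_{N}\cdot\xi}\bigr)$, so that the elementary interpolation inequality $|1-e^{i\theta}|\le 2^{1-\zeta}|\theta|^{\zeta}$ (valid for $0\le\zeta\le1$), together with the isotropy bound $|M^{-N}\lambda_{N}\cdot\xi|=|\lambda_{N}\cdot(M^{T})^{-N}\xi|\le c_{0}m^{-N}\|\lambda_{N}\|_{2}\|\xi\|_{2}$, gives $|1-e^{iM^{-N}\lambda_{N}\cdot\xi}|^{2}\le 4^{1-\zeta}c_{0}^{2\zeta}m^{-2N\zeta}\|\lambda_{N}\|_{2}^{2\zeta}\|\xi\|_{2}^{2\zeta}$. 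Absorbing $\|\xi\|_{2}^{2\zeta}\le(1+\|\xi\|_{2}^{2})^{\zeta}$ into the Sobolev weight and using $s+\zeta\le\varsigma$ (which holds since $\zeta\le\varsigma-s$), one obtains
\[
\|g\|_{H^{s}(\mathbb{R}^{d})}\le 2^{1-\zeta}c_{0}^{\zeta}\,m^{-N\zeta}\,\|\lambda_{N}\|_{2}^{\zeta}\,\|f\|_{H^{\varsigma}(\mathbb{R}^{d})},
\]
where $c_{0}$ is the $N$-independent constant arising from $M$ being similar to a diagonal matrix with eigenvalues of modulus $m$.

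Next I would split by the triangle inequality $\|(I-\mathcal{S}_{\phi;\theta_{N}}^{N})g\|_{2}\le\|(I-\mathcal{S}_{\phi}^{N})g\|_{2}+\|(\mathcal{S}_{\phi}^{N}-\mathcal{S}_{\phi;\theta_{N}}^{N})g\|_{2}=:T_{1}+T_{2}$. The term $T_{2}$ is handled directly by \eqref{xiangdui05} of Theorem \ref{Theorem x2} applied to $g$, which yields $T_{2}\le C_{2}(s,\alpha)\|g\|_{H^{s}(\mathbb{R}^{d})}\|\theta_{N}\|_{\hbox{m}}\,m^{-N(\frac{4s+(\alpha-2)d}{2s-\alpha+2}+d)/2}$. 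Since the exponent $(\frac{4s+(\alpha-2)d}{2s-\alpha+2}+d)/2$ is positive under $s>d/2$ and $0<\alpha<\min\{2s-d,2\}$, the extra decay factor is $\le1$, and inserting the bound on $\|g\|_{H^{s}}$ gives $T_{2}\le C\,\|f\|_{H^{\varsigma}(\mathbb{R}^{d})}\,\|\theta_{N}\|_{\hbox{m}}\,\|\lambda_{N}\|_{2}^{\zeta}\,m^{-N\zeta}$.

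For $T_{1}$ I would use $\|\cdot\|_{2}\le\|\cdot\|_{H^{s}(\mathbb{R}^{d})}$ (valid since $s>0$) together with the scale-independent boundedness of the truncated reconstruction operator: writing $(I-\mathcal{S}_{\phi}^{N})h$ as the frame tail $\sum_{\ell}\sum_{j\ge N}\sum_{k}\langle h,\widetilde{\psi}^{\ell,-s}_{j,k}\rangle\psi^{\ell,s}_{j,k}$, the synthesis bound $\|\emph{\textsf{P}}\|$ of \eqref{fgzhuhai} together with the Bessel bound of the dual framelet coefficients $\{\langle h,\widetilde{\psi}^{\ell,-s}_{j,k}\rangle\}$ (summed over all scales) produces a constant $C_{3}$, independent of $N$, with $\|(I-\mathcal{S}_{\phi}^{N})h\|_{H^{s}(\mathbb{R}^{d})}\le C_{3}\|h\|_{H^{s}(\mathbb{R}^{d})}$. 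Taking $h=g$ and invoking the $\|g\|_{H^{s}}$-estimate yields $T_{1}\le C_{3}\,2^{1-\zeta}c_{0}^{\zeta}\,\|f\|_{H^{\varsigma}(\mathbb{R}^{d})}\,\|\lambda_{N}\|_{2}^{\zeta}\,m^{-N\zeta}$. Adding $T_{1}$ and $T_{2}$ and setting $\widetilde{C}_{2}:=2^{1-\zeta}c_{0}^{\zeta}\max\{C_{3},C_{2}(s,\alpha)\}$ gives \eqref{ghgvb}.

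The main obstacle is the term $T_{1}$: one cannot simply apply Theorem \ref{Theorem x1} to $g$, because that would produce the factor $\|g\|_{H^{\varsigma}(\mathbb{R}^{d})}\le2\|f\|_{H^{\varsigma}(\mathbb{R}^{d})}$, which does not carry the needed $\|\lambda_{N}\|_{2}^{\zeta}m^{-N\zeta}$ smallness. The whole point is to trade decay in $\lambda_{N}$ against the weaker $H^{s}$-norm of $g$, which forces me to use the scale-independent \emph{boundedness} of $\mathcal{S}_{\phi}^{N}$ on $H^{s}(\mathbb{R}^{d})$ rather than its convergence rate. The remaining points are bookkeeping: confirming $(\frac{4s+(\alpha-2)d}{2s-\alpha+2}+d)/2>0$ so the $T_{2}$ exponent dominates $\zeta$, and checking that $N\ge\frac{2s+2-\alpha}{2-\alpha}\log_{m}d$ makes the hypotheses of Theorem \ref{Theorem x2} applicable to $g$.
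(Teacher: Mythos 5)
Your proposal is correct and follows the same skeleton as the paper: split off $g:=f-f(\cdot+M^{-N}\lambda_{N})$, bound $\|g\|_{H^{s}(\mathbb{R}^{d})}\leq 2^{1-\zeta}m^{-N\zeta}\|\lambda_{N}\|_{2}^{\zeta}\|f\|_{H^{\varsigma}(\mathbb{R}^{d})}$ exactly as in \eqref{dddf}, and control $\|(\mathcal{S}_{\phi}^{N}-\mathcal{S}_{\phi;\theta_{N}}^{N})g\|_{2}$ by \eqref{xiangdui05}. The one place you genuinely diverge is the term $T_{1}=\|(I-\mathcal{S}_{\phi}^{N})g\|_{2}$. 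The paper handles it in \eqref{zjxc} by re-invoking Theorem \ref{Theorem x1} at the downshifted index pair $\big(\tfrac{t+s}{2},s\big)$, so that the right-hand side carries $\|g\|_{H^{s}(\mathbb{R}^{d})}$ rather than $\|g\|_{H^{\varsigma}(\mathbb{R}^{d})}$, plus an extra decay factor $m^{-\eta_{\kappa+1}(\frac{t+s}{2},s)N}$ that is simply discarded. You instead prove $N$-uniform boundedness of the tail operator on $H^{s}(\mathbb{R}^{d})$ directly from the synthesis bound \eqref{fgzhuhai}--\eqref{dfg} and the Bessel property of the dual coefficients $\{\langle h,\widetilde{\psi}^{\ell,-s}_{j,k}\rangle\}$ (which follows from the frame inequality for $X^{-s}$ via the isometry $\theta_{s}$, as the paper itself notes). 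Both routes are valid and yield the same structure of constant; yours is slightly more elementary in that it uses only the frame bounds and does not need the intermediate smoothness index $t$ (nor the vanishing moments) for this term, while the paper's route gives, as a by-product, additional unneeded decay in $N$. You also correctly identify why one cannot apply Theorem \ref{Theorem x1} to $g$ at the pair $(s,\varsigma)$ — that is precisely the trade-off the paper's choice of indices is engineered to avoid.
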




\begin{proof}
By the triangle inequality and Theorem \ref{Theorem x2}
\eqref{xiangdui05},  we estimate
$||\big(I-\mathcal{S}_{\phi;\theta_{N}}^{N}\big)\big(f-f(\cdot+M^{-N}\lambda_{N})\big)||_{2}$
as follows,
\begin{align} \begin{array}{lllll} \label{raodong} \displaystyle ||\big(I-\mathcal{S}_{\phi;\theta_{N}}^{N}\big)\big(f-f(\cdot+M^{-N}\lambda_{N})\big)||_{2}\\
 \leq
 ||\big(I-\mathcal{S}_{\phi}^{N}\big)\big(f-f(\cdot+M^{-N}\lambda_{N})\big)||_{2}+||\big(\mathcal{S}_{\phi;\theta_{N}}^{N}-\mathcal{S}_{\phi}^{N}\big)\big(f-f(\cdot+M^{-N}\lambda_{N})\big)||_{2}\\
 \leq
 ||\big(I-\mathcal{S}_{\phi}^{N}\big)\big(f-f(\cdot+M^{-N}\lambda_{N})\big)||_{2}\\
 \ +C_{2}(s,
\alpha) ||\theta_{N}||_{\hbox{m}}m^{-N(\frac{4s+(\alpha-2)d}{2s-\alpha+2}+d)/2}||f-f(\cdot+M^{-N}\lambda_{N})||_{H^{s}(\mathbb{R}^{d})}.
\end{array}\end{align}
Invoking  \eqref{bound1}, we get
\begin{align}\label{zjxc} \begin{array}{lllll} ||\big(I-\mathcal{S}_{\phi}^{N}\big)\big(f-f(\cdot+M^{-N}\lambda_{N})\big)||_{2}&\leq C\big(\frac{t+s}{2},
s\big) m^{-\eta_{\kappa+1}(\frac{t+s}{2},  s)
N}||f-f(\cdot+M^{-N}\lambda_{N})||_{H^{s}(\mathbb{R}^{d})}.\end{array}\end{align}
 On the other hand,
\begin{align}\label{dddf}\begin{array}{lllll}\displaystyle
||f-f(\cdot+M^{-N}\lambda_{N})||_{H^{s}(\mathbb{R}^{d})}\\
=\displaystyle\Big[\frac{1}{(2\pi)^{d}}\int_{\mathbb{R}^{d}}|\widehat{f}(\xi)(1-e^{i(M^{T})^{-N}\lambda_{N}\cdot\xi})|^{2}
(1+||\xi||^{2}_{2})^{s}d\xi\Big]^{1/2}\\
\leq\displaystyle\Big[\frac{1}{(2\pi)^{d}}\int_{\mathbb{R}^{d}}4|\sin\big((M^{T})^{-N}\lambda_{N}\cdot\xi/2\big)|^{2\zeta}
|\widehat{f}(\xi)|^{2}(1+||\xi||^{2}_{2})^{s}d\xi\Big]^{1/2}\\
\leq\displaystyle\Big[\frac{2^{2-2\zeta}||(M^{T})^{-N}\lambda_{N}||^{2\zeta}_{2}}{(2\pi)^{d}}\int_{\mathbb{R}^{d}}||\xi||_{2}^{2\zeta}
|\widehat{f}(\xi)|^{2}(1+||\xi||^{2}_{2})^{s}d\xi\Big]^{1/2}\\
\leq\displaystyle2^{1-\zeta}m^{-N\zeta}||\lambda_{N}||_{2}^{\zeta}||f||_{H^{\varsigma}(\mathbb{R}^{d})}.
\end{array}\end{align}
Select
\begin{align}\nonumber
\widetilde{C}_{2}:=2^{1-\zeta}\max\big\{C\big(\frac{t+s}{2}, s\big),
\ \ C_{2}(s, \alpha) \big\}.
\end{align}
 Now the proof of \eqref{ghgvb} can be  concluded by
\eqref{raodong}, \eqref{zjxc} and \eqref{dddf}.
\end{proof}

The estimate of $||(I-\mathcal{S}_{\phi; \theta_{N}}^{N})f||_{2}$ in
Theorem \ref{Theorem x2} \eqref{raodong1} holds for the perturbation
sequence  $\theta_{N}=\{\theta_{N,k}\}\in
l^{\alpha}(\mathbb{Z}^{d})$. Now based on Theorem \ref{Theorem x2}
and Lemma \ref{Theorem x3}, we estimate $||(I-\mathcal{S}_{\phi;
\varepsilon_{N}}^{N})f||_{2}$ for any $\lambda$-clustered sequence
$\varepsilon_{N}=\{\varepsilon_{N,k}:=\theta_{N,k}+\lambda_{N}\}_{k\in\mathbb{Z}^{d}}$
defined in \eqref{ghbv}.

\begin{theo}\label{Theorem xe4} Let $s>d/2$. Suppose that $N\geq \frac{2s+2-\alpha}{2-\alpha}\log_{m}d$ is arbitrary, and a sequence
$\varepsilon_{N}=\{\varepsilon_{N,k}:=\theta_{N,k}+\lambda_{N}\}_{k\in\mathbb{Z}^{d}}$
is $\lambda_{N}$-clustered in $l^{\alpha}(\mathbb{Z}^{d})$, where
$\lambda_{N}\in\mathbb{R}^{d}$ and $0<\alpha<\min\{2s-d, 2\}$. The
two $M$-refinable functions $\phi\in H^{\varsigma}(\mathbb{R}^{d})$
and $\widetilde{\phi}\in H^{-t}(\mathbb{R}^{d})$ are as in Lemma
\ref{Theorem x3}, where $d/2<t<s<\varsigma$.
 Then there exists $C_{3}>0$ (being independent of  $N$)  such
 that
  \begin{align} \begin{array}{lllll} \label{jyy}
 ||(I-\mathcal{S}_{\phi;\varepsilon_{N}}^{N})f||_{2}\leq||(I-\mathcal{S}_{\phi}^{N})f||_{2}
+
C_{3}||f||_{H^{\varsigma}(\mathbb{R}^{d})}m^{-N\zeta}\big[(1+||\lambda_{N}||_{2}^{\zeta})
||\theta_{N}||_{\hbox{m}}+||\lambda_{N}||_{2}^{\zeta} \big]  \
\end{array}\end{align}
holds  for every $ f\in H^{\varsigma}(\mathbb{R}^{d}),$  where $
\zeta=\min\{\varsigma-s, 1,(\frac{4s+(\alpha-2)d}{2s-\alpha+2}+d)/2
\} $, and
$||\theta_{N}||_{\hbox{m}}=\max\{||\theta_{N}||_{l^{2}(\mathbb{Z}^{d})},
||\theta_{N}||^{\alpha/2}_{l^{\alpha}(\mathbb{Z}^{d})}\}$.
\end{theo}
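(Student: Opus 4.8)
The plan is to reduce the estimate to the two bounds already in hand—Theorem \ref{Theorem x2} \eqref{xiangdui05} for an $l^{\alpha}$-perturbation and Lemma \ref{Theorem x3} \eqref{ghgvb} for a pure translation—by exploiting the decomposition $\varepsilon_{N,k}=\theta_{N,k}+\lambda_N$ of the clustered sequence into its $l^{\alpha}$-part $\theta_N$ and its constant part $\lambda_N$.

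The conceptual step is an identity transferring the constant shift $\lambda_N$ inside the samples onto a translation of $f$. Setting $f_{\lambda}:=f(\cdot+M^{-N}\lambda_N)$ and computing on the Fourier side, a change of variable gives
\[
\langle f,\,m^{N(d/2+s)}\widetilde{\phi}(M^{N}\cdot-k-\theta_{N,k}-\lambda_N)\rangle=\langle f_{\lambda},\,m^{N(d/2+s)}\widetilde{\phi}(M^{N}\cdot-k-\theta_{N,k})\rangle
\]
for every $k$, whence $\mathcal{S}_{\phi;\varepsilon_N}^{N}f=\mathcal{S}_{\phi;\theta_N}^{N}f_{\lambda}$. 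This is what lets the $\theta_N$-dependence be handled by Theorem \ref{Theorem x2} and the $\lambda_N$-dependence by Lemma \ref{Theorem x3}, whose hypothesis is stated exactly for the difference $f-f(\cdot+M^{-N}\lambda_N)$.

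With this identity I would split the error through the telescoping chain
\[
(I-\mathcal{S}_{\phi;\varepsilon_N}^{N})f=(I-\mathcal{S}_{\phi}^{N})f+(\mathcal{S}_{\phi}^{N}-\mathcal{S}_{\phi;\theta_N}^{N})f+\mathcal{S}_{\phi;\theta_N}^{N}(f-f_{\lambda}),
\]
which uses $\mathcal{S}_{\phi;\varepsilon_N}^{N}f=\mathcal{S}_{\phi;\theta_N}^{N}f_{\lambda}$ together with $\mathcal{S}_{\phi}^{N}=\mathcal{S}_{\phi;0}^{N}$, followed by the triangle inequality. The first summand is exactly the term retained on the right of \eqref{jyy}. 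The second is controlled by \eqref{xiangdui05}; since $\zeta\leq(\frac{4s+(\alpha-2)d}{2s-\alpha+2}+d)/2$ and $||f||_{H^{s}(\mathbb{R}^{d})}\leq||f||_{H^{\varsigma}(\mathbb{R}^{d})}$, its bound is absorbed into a constant multiple of $||f||_{H^{\varsigma}(\mathbb{R}^{d})}\,||\theta_N||_{\hbox{m}}\,m^{-N\zeta}$. For the third summand I would write $\mathcal{S}_{\phi;\theta_N}^{N}(f-f_{\lambda})=(f-f_{\lambda})-(I-\mathcal{S}_{\phi;\theta_N}^{N})(f-f_{\lambda})$ and bound the two pieces separately: the first by the Sobolev translation estimate \eqref{dddf}, of order $m^{-N\zeta}||\lambda_N||_{2}^{\zeta}||f||_{H^{\varsigma}(\mathbb{R}^{d})}$, and the second directly by \eqref{ghgvb}, of order $(1+||\theta_N||_{\hbox{m}})||\lambda_N||_{2}^{\zeta}m^{-N\zeta}||f||_{H^{\varsigma}(\mathbb{R}^{d})}$. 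Collecting the coefficients of $||\theta_N||_{\hbox{m}}$, of $||\lambda_N||_{2}^{\zeta}$, and of the cross term $||\theta_N||_{\hbox{m}}||\lambda_N||_{2}^{\zeta}$, and taking $C_3:=\max\{C_2(s,\alpha),\,2^{1-\zeta}+\widetilde{C}_2\}$, yields exactly \eqref{jyy}.

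I expect the only genuine difficulty to be the verification of the shift-to-translation identity: one must check on the Fourier side that the constant increment $\lambda_N$ can be peeled off each sample and re-attached as the modulation $e^{i\lambda_N\cdot(M^{T})^{-N}\xi}$ defining $\widehat{f_{\lambda}}$, uniformly in $k$, so that the $\theta_N$-structure is left untouched and Lemma \ref{Theorem x3} applies verbatim. After that identity the remainder is bookkeeping: combining the triangle inequality with the two earlier estimates and the elementary inequalities $||\cdot||_{2}\leq||\cdot||_{H^{s}(\mathbb{R}^{d})}\leq||\cdot||_{H^{\varsigma}(\mathbb{R}^{d})}$ and $\zeta\leq(\frac{4s+(\alpha-2)d}{2s-\alpha+2}+d)/2$ to unify all decay terms at the single rate $m^{-N\zeta}$.
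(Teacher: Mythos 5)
Your proposal is correct and follows essentially the same route as the paper: the coefficient identity transferring the constant shift $\lambda_N$ onto the translation $f-f(\cdot+M^{-N}\lambda_N)$ is exactly the paper's equation \eqref{split1} (proved there via Plancherel), and your telescoping decomposition, after expanding $(I-\mathcal{S}_{\phi;\theta_N}^{N})f$ via Theorem \ref{Theorem x2}, coincides with the paper's chain \eqref{raodong011}--\eqref{zhgj}, including the splitting of $\mathcal{S}_{\phi;\theta_N}^{N}(f-f_\lambda)$ into $(f-f_\lambda)$ and $(I-\mathcal{S}_{\phi;\theta_N}^{N})(f-f_\lambda)$ handled by \eqref{dddf} and Lemma \ref{Theorem x3}. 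Your choice of $C_3$ differs only by an inessential factor of $2$.
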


\begin{proof}
By the Plancherel's theorem, we get
\begin{align}\begin{array}{lllll} \label{split1}
\displaystyle\big\langle f,
m^{Nd/2}\widetilde{\phi}(M^{N}\cdot-k-\theta_{N, k})
-m^{Nd/2}\widetilde{\phi}(M^{N}\cdot-k-\theta_{N,k}-\lambda_{N})\big\rangle\\
=\displaystyle\frac{m^{-Nd/2}}{(2\pi)^{d}}\displaystyle\int_{\mathbb{R}^{d}}\widehat{f}(\xi)(1-e^{i(M^{T})^{-N}\lambda_{N}\cdot\xi})\overline{\widehat{\widetilde{\phi}}((M^{T})^{-N}\xi)}e^{i(M^{T})^{-N}(k+\theta_{N,k})\xi}d\xi\\
=\displaystyle\big\langle f-f(\cdot+M^{-N}\lambda_{N}),
m^{Nd/2}\widetilde{\phi}(M^{N}\cdot-k-\theta_{N,k})\big\rangle.\end{array}
\end{align}
Using the triangle inequality and \eqref{split1}, the error $
||(I-\mathcal{S}_{\phi;\varepsilon_{N}}^{N})f||_{2}$ is estimated as
follows,
\begin{align} \begin{array}{lllll} \label{raodong011}
 ||(I-\mathcal{S}_{\phi;\varepsilon_{N}}^{N})f||_{2}\\
\leq ||(I-\mathcal{S}_{\phi;\theta_{N}}^{N})f||_{2}+
||\mathcal{S}_{\phi;\theta_{N}}^{N}(f-f(\cdot+M^{-N}\lambda_{N}))||_{2}\\
\leq
||(I-\mathcal{S}_{\phi;\theta_{N}}^{N})f||_{2}+||f-f(\cdot+M^{-N}\lambda_{N})||_{2}+
||(I-\mathcal{S}_{\phi;\theta_{N}}^{N})(f-f(\cdot+M^{-N}\lambda_{N}))||_{2}.
\end{array}\end{align}
It follows from Lemma \ref{Theorem x3} \eqref{ghgvb} and
\eqref{dddf} that
\begin{align}
\begin{array}{lllll} \label{gbgb}
||(I-\mathcal{S}_{\phi;\theta_{N}}^{N})(f-f(\cdot+M^{-N}\lambda_{N}))||_{2}+||f-f(\cdot+M^{-N}\lambda_{N})||_{2}\\
\leq
\widetilde{C}_{2}||f||_{H^{\varsigma}(\mathbb{R}^{d})}(1+||\theta_{N}||_{\hbox{m}})||\lambda_{N}||_{2}^{\zeta}m^{-N\widetilde{\eta}_{\kappa+1}}+2^{1-\zeta}m^{-N\zeta}||\lambda_{N}||_{2}^{\zeta}||f||_{H^{\varsigma}(\mathbb{R}^{d})}\\
\leq\widetilde{C}_{2}||f||_{H^{\varsigma}(\mathbb{R}^{d})}(1+||\theta_{N}||_{\hbox{m}})||\lambda_{N}||_{2}^{\zeta}m^{-N\zeta}+2^{1-\zeta}m^{-N\zeta}||\lambda_{N}||_{2}^{\zeta}||f||_{H^{\varsigma}(\mathbb{R}^{d})}.
\end{array}\end{align}
It follows from  $\zeta\leq(\frac{4s+(\alpha-2)d}{2s-\alpha+2}+d)/2$
and Theorem \ref{Theorem x2} \eqref{raodong1} that
\begin{align} \begin{array}{lllll} \label{huaz}
||(I-\mathcal{S}_{\phi;\theta_{N}}^{N})f||_{2}& \leq
||(I-\mathcal{S}_{\phi}^{N})f||_{2}+C_{2}(s,
\alpha)||f||_{H^{s}(\mathbb{R}^{d})}
||\theta_{N}||_{\hbox{m}}m^{-N\zeta}.
\end{array}\end{align}
From \eqref{gbgb} and \eqref{huaz}, we arrive at
  \begin{align} \begin{array}{lllll} \label{zhgj}
 ||\big(I-\mathcal{S}_{\phi;\varepsilon_{N}}^{N}\big)f||_{2}-||(I-\mathcal{S}_{\phi}^{N})f||_{2}\\
\leq
\widetilde{C}_{2}\big(1+||\theta_{N}||_{\hbox{m}}\big)||f||_{H^{\varsigma}(\mathbb{R}^{d})}||\lambda_{N}||_{2}^{\zeta}m^{-N\zeta}+2^{1-\zeta}||f||_{H^{\varsigma}(\mathbb{R}^{d})}m^{-N\zeta}||\lambda_{N}||_{2}^{\zeta}\\
\  \ +C_{2}(s, \alpha)
||\theta_{N}||_{\hbox{m}}||f||_{H^{\varsigma}(\mathbb{R}^{d})}m^{-N\zeta}\\
=||f||_{H^{\varsigma}(\mathbb{R}^{d})}m^{-N\zeta}\big[\widetilde{C}_{2}(1+||\theta_{N}||_{\hbox{m}})||\lambda_{N}||_{2}^{\zeta}+2^{1-\zeta}||\lambda_{N}||_{2}^{\zeta}+C_{2}(s,
\alpha) ||\theta_{N}||_{\hbox{m}}\big].
\end{array}\end{align}
Define
$$C_{3}:=2\max\big\{\widetilde{C}_{2}+2^{1-\zeta}, \ C_{2}(s,
\alpha)\big\}$$ to conclude the proof.
\end{proof}

\begin{rem} \label{kjcw}
(I) It is straightforward to see that  for any
$\lambda_{N}$-clustered sequence $\varepsilon_{N}$ with
$\lambda_{N}\neq0$,
$||\varepsilon_{N}||_{\ell^{\beta}(\mathbb{Z}^{d})}=\infty$ where
$\beta>0$ is arbitrary. Therefore  the estimate of
$||(I-\mathcal{S}_{\phi;\varepsilon_{N}}^{N})f||_{2}$ can not be
given  only   by Theorem \ref{Theorem x2}. Instead, separating the
constant sequence $\{\lambda_{N}\}$ form $\varepsilon_{N}$, we
combine Theorem \ref{Theorem x2} and Lemma \ref{Theorem x3} to
complete the error estimate in Theorem \ref{Theorem xe4}.

(II) For every scale level $N$, it follows from Theorem \ref{Theorem
xe4} \eqref{jyy} that if  \begin{align} \label{ganrtj}
||\lambda_{N}||_{2}^{\zeta}+
\max\{||\theta_{N}||_{l^{2}(\mathbb{Z}^{d})},
||\theta_{N}||^{\alpha/2}_{l^{\alpha}(\mathbb{Z}^{d})}\}
=\hbox{o}(m^{N\gamma}),\end{align} where $\gamma<\zeta/2$, then
$\lim_{N\rightarrow\infty} \mathcal{S}_{\phi;
\varepsilon_{N}}^{N}f=f$ in the sense of $||\cdot||_{2}$. That is,
when the perturbation sequence
$\varepsilon_{N}=\{\varepsilon_{N,k}:=\theta_{N,k}+\lambda_{N}\}_{k\in\mathbb{Z}^{d}}$
is bounded by \eqref{ganrtj}, then the approximation
$\mathcal{S}_{\phi; \varepsilon_{N}}^{N}f$ is robust to the
perturbation. Moreover, for larger scale level $N$,
$\mathcal{S}_{\phi}^{N}f$ performs better against perturbation.
\end{rem}

%
%
%

\section{Approximation to functions in Sobolev spaces by nonuniform samples}
\label{bugzecy}

With the help of Theorem \ref{Theorem x1} and  Theorem  \ref{Theorem
xe4} we establish the following nonuniform sampling theorem, which
states that  any function in $H^{s}(\mathbb{R}^{d})$ (where $s>d/2$)
can be stably reconstructed by nonuniform samples  with a carefully
selected pair of framelets for  $(H^{s}(\mathbb{R}^{d}),
H^{-s}(\mathbb{R}^{d}))$.

\begin{theo}\label{Theorem x4}  Let   $\phi\in H^{s}(\mathbb{R}^{d})$ be  $M$-refinable where $s>d/2$.
Suppose that  $N\geq \frac{2s+2-\alpha}{2-\alpha}\log_{m}d$ is
arbitrary. Assume that $\phi$ belongs to $
 H^{\varsigma}(\mathbb{R}^{d})$ and  has $\kappa+1$
 sum rules where $s<\varsigma<\kappa+1$, and a sequence
$\varepsilon_{N}=\{\varepsilon_{N,k}:=\theta_{N,k}+\lambda_{N}\}_{k\in\mathbb{Z}^{d}}$
is $\lambda_{N}$-clustered in $l^{\alpha}(\mathbb{Z}^{d})$ with
$\lambda_{N}\in\mathbb{R}^{d}$ and $0<\alpha<\min\{2s-d, 2\}$.
 Then  there exists
 $C_{0}>0$ (being independent of $N$)  such that
\begin{align}\begin{array}{lll} \label{irsamapp}
\displaystyle
||f-\sum_{k\in\mathbb{Z}^{d}}f(M^{-N}(k+\varepsilon_{N,k}))\phi(M^{N}\cdot-k)||_{2}\\
 \leq
C_{0}||f||_{H^{\varsigma}(\mathbb{R}^{d})}\big[
m^{-\eta_{\kappa+1}(s, \varsigma)
N}+m^{-N\zeta}\big((1+||\lambda_{N}||_{2}^{\zeta})
||\theta_{N}||_{\hbox{m}}+||\lambda_{N}||_{2}^{\zeta} \big)\big]
\end{array}
\end{align}
holds  for every $ f\in H^{\varsigma}(\mathbb{R}^{d}),$ where $
\zeta=\min\{\varsigma-s, 1,(\frac{4s+(\alpha-2)d}{2s-\alpha+2}+d)/2
\} $, $\eta_{\kappa+1}$ is defined in Theorem \ref{Theorem x1}
\eqref{rt609}, and as in Theorem \ref{Theorem xe4},
$||\theta_{N}||_{\hbox{m}}=\max\{||\theta_{N}||_{l^{2}(\mathbb{Z}^{d})},
||\theta_{N}||^{\alpha/2}_{l^{\alpha}(\mathbb{Z}^{d})}\}$.
\end{theo}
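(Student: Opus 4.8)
The plan is to recognize the nonuniform sampling sum in \eqref{irsamapp} as the perturbed operator $\mathcal{S}_{\phi;\varepsilon_{N}}^{N}f$ of \eqref{suanzidigyi}, built from the special refinable sampling function $\Delta$ of \eqref{caiyanghanshu}, and then to read off \eqref{irsamapp} from the two main estimates of Section 2. Recall that $\Delta$ is defined by $\widehat{\Delta}\equiv1$. Since $s>d/2$, we have $\int_{\mathbb{R}^{d}}(1+||\xi||_{2}^{2})^{-t}d\xi<\infty$ for every $t>d/2$, so $\Delta\in H^{-t}(\mathbb{R}^{d})$ for all $d/2<t<s$; its mask symbol is $\widehat{\widetilde{a}}\equiv1$, and $||\widehat{\Delta}||_{L^{\infty}(\mathbb{R}^{d})}=1<\infty$. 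Hence $\widetilde{\phi}=\Delta$ satisfies every hypothesis imposed on the dual generator in Lemma \ref{Theorem x3} and Theorem \ref{Theorem xe4}.

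First I would establish the sampling identity. Using $\widehat{\Delta}\equiv1$ together with the scaling rule $\widehat{\Delta(M^{N}\cdot-b)}(\xi)=m^{-Nd}e^{-iM^{-N}b\cdot\xi}$ and the Fourier inversion formula, a direct computation gives
\begin{align}\nonumber
\big\langle f,\ m^{N(d/2+s)}\Delta(M^{N}\cdot-k-\varepsilon_{N,k})\big\rangle=m^{N(s-d/2)}f\big(M^{-N}(k+\varepsilon_{N,k})\big).
\end{align}
Multiplying by the factor $m^{N(d/2-s)}$ carried by $\phi_{N,k}^{s}$ in \eqref{dingyi}, the two scaling powers cancel, so that with $\widetilde{\phi}=\Delta$ the operator in \eqref{suanzidigyi} becomes
\begin{align}\nonumber
\mathcal{S}_{\phi;\varepsilon_{N}}^{N}f=\sum_{k\in\mathbb{Z}^{d}}f\big(M^{-N}(k+\varepsilon_{N,k})\big)\phi(M^{N}\cdot-k),
\end{align}
which is exactly the approximant appearing in \eqref{irsamapp}.

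Next I would check that $(\phi,\Delta)$ actually generates a pair of dual $M$-framelets, so that Theorem \ref{Theorem xe4} applies. Since $\phi$ has $\kappa+1$ sum rules, the MEP construction of Subsection \ref{buguize} produces masks satisfying \eqref{hanbin9} with $\widehat{\widetilde{a}}\equiv1$, and the resulting $\widetilde{\psi}^{\ell}$ have $\kappa+1$ vanishing moments; together with $\phi\in H^{\varsigma}(\mathbb{R}^{d})$ and $\Delta\in H^{-t}(\mathbb{R}^{d})$ for some $d/2<t<s<\varsigma<\kappa+1$, this places $(\phi,\Delta)$ precisely in the framework of Lemma \ref{Theorem x3} and Theorem \ref{Theorem xe4}. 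Invoking Theorem \ref{Theorem xe4} \eqref{jyy} then bounds $||(I-\mathcal{S}_{\phi;\varepsilon_{N}}^{N})f||_{2}$ by $||(I-\mathcal{S}_{\phi}^{N})f||_{2}$ plus the perturbation term $C_{3}||f||_{H^{\varsigma}(\mathbb{R}^{d})}m^{-N\zeta}\big[(1+||\lambda_{N}||_{2}^{\zeta})||\theta_{N}||_{\hbox{m}}+||\lambda_{N}||_{2}^{\zeta}\big]$. For the truncation term I would use $s>0$, which yields $||g||_{2}\le||g||_{H^{s}(\mathbb{R}^{d})}$ for all $g$, and then Theorem \ref{Theorem x1} \eqref{bound1} to obtain
\begin{align}\nonumber
||(I-\mathcal{S}_{\phi}^{N})f||_{2}\le||(I-\mathcal{S}_{\phi}^{N})f||_{H^{s}(\mathbb{R}^{d})}\le C(s,\varsigma)m^{-\eta_{\kappa+1}(s,\varsigma)N}||f||_{H^{\varsigma}(\mathbb{R}^{d})}.
\end{align}
Adding the two contributions and setting $C_{0}:=\max\{C(s,\varsigma),\,C_{3}\}$ produces \eqref{irsamapp}.

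The main obstacle is the identification step, i.e. simultaneously verifying the sampling identity and the admissibility of $\widetilde{\phi}=\Delta$ in Theorem \ref{Theorem xe4}. That admissibility rests on $\Delta\in H^{-t}(\mathbb{R}^{d})$ for some $t\in(d/2,s)$ and on $||\widehat{\Delta}||_{L^{\infty}(\mathbb{R}^{d})}<\infty$, both of which require $s>d/2$; this is the very threshold that, through the control of $I_{1}(J)$ in the proof of Theorem \ref{Theorem x2}, renders the shift perturbation summable. Once the sampling sum is recast as $\mathcal{S}_{\phi;\varepsilon_{N}}^{N}f$, the estimate \eqref{irsamapp} follows immediately from Theorems \ref{Theorem x1} and \ref{Theorem xe4}.
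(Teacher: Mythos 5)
Your proposal is correct and follows essentially the same route as the paper: construct the Dirac-tensor distribution $\Delta$ with $\widehat{\Delta}\equiv1$ as the dual refinable function, use the sum rules of $\phi$ and the MEP to obtain the dual framelet pair, identify the nonuniform sampling sum with $\mathcal{S}_{\phi;\varepsilon_{N}}^{N}f$ via the sampling property of $\Delta$, and then add the truncation bound of Theorem \ref{Theorem x1} to the perturbation bound of Theorem \ref{Theorem xe4} with $C_{0}=\max\{C(s,\varsigma),C_{3}\}$. Your explicit Fourier-side verification of the scaling cancellation and the remark that $\|\cdot\|_{2}\leq\|\cdot\|_{H^{s}(\mathbb{R}^{d})}$ only make explicit steps the paper leaves implicit.
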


\begin{proof}
Construct a distribution $\Delta$ on $\mathbb{R}^{d}$ by
\begin{align}\label{caiyanghanshu} \Delta(x_{1}, x_{2},
\ldots, x_{d})=\delta(x_{1}) \times\delta(x_{2})\times\cdots
\times\delta(x_{d}),\end{align} where $\delta$ is the delta
distribution on $\mathbb{R}$, and $\times$ is the tensor product. It
follows from $\widehat{\delta}\equiv1$ that $\Delta\in
H^{-t}(\mathbb{R}^{d})$ is $M$-refinable for any $t>d/2$. We suppose
here that $t$ is
 smaller than $s$.
Since $\phi$  has $\kappa+1$ sum rules, by MEP \cite[Algorithm
4.1]{Li0}, we can construct a pair of dual $M$-framelet systems
$X^{s}(\phi;\psi^{1}, \psi^{2},\ldots, \psi^{m^{d}})$ and
$X^{-s}(\Delta;\widetilde{\psi}^{1},
 \widetilde{\psi}^{2}, \ldots, \widetilde{\psi}^{m^{d}})$ such that $\widetilde{\psi}^{1},
 \widetilde{\psi}^{2}, \ldots,$ and $\widetilde{\psi}^{m^{d}}$ have $\kappa+1$ vanishing
moments. Recalling  the sampling property of $\delta$, we have
\begin{align}\label{integral} \langle f, \Delta\rangle=f(0).\end{align}
Combining  \eqref{rt1} and \eqref{integral}, the operators
$\mathcal{S}_{\phi}^{N}$ and $\mathcal{S}_{\phi;\varepsilon}^{N}$
defined in \eqref{rt} and \eqref{suanzidigyi} can be expressed by
\begin{align}\label{integral1}\mathcal{S}_{\phi}^{N}f=\sum_{k\in\mathbb{Z}^{d}}f(M^{-N}k)\phi(M^{N}\cdot-k), \ \mathcal{S}_{\phi;\varepsilon}^{N}f=\sum_{k\in\mathbb{Z}^{d}}f(M^{-N}(k+\varepsilon_{k}))\phi(M^{N}\cdot-k).\end{align}
 By Theorem \ref{Theorem x1} \eqref{bound1} and Theorem \ref{Theorem xe4} \eqref{jyy}, we obtain
\begin{align}\begin{array}{lll} \nonumber \displaystyle
||f-\sum_{k\in\mathbb{Z}^{d}}f(M^{-N}(k+\varepsilon_{N,k}))\phi(M^{N}\cdot-k)||_{2}\\
\leq ||f||_{H^{\varsigma}(\mathbb{R}^{d})}\big[ C(s, \varsigma)
m^{-\eta_{\kappa+1}(s, \varsigma)
N}+C_{3}m^{-N\zeta}\big((1+||\lambda_{N}||_{2}^{\zeta})
||\theta_{N}||_{\hbox{m}}+||\lambda_{N}||_{2}^{\zeta}
\big)\big]\\
\leq C_{0}||f||_{H^{\varsigma}(\mathbb{R}^{d})}\big[
m^{-\eta_{\kappa+1}(s, \varsigma)
N}+m^{-N\zeta}\big((1+||\lambda_{N}||_{2}^{\zeta})
||\theta_{N}||_{\hbox{m}}+||\lambda_{N}||_{2}^{\zeta} \big)\big],
\end{array}
\end{align}
where $C_{0}=\max\{C(s, \varsigma), \ C_{3}\}.$
\end{proof}

\begin{rem}
In \eqref{irsamapp},  the constant sequence $\{\lambda_{N}\}$ does
not contribute to the sampling nonuniformity. However, as mentioned
in Remark \ref{kjcw} (I), separating $\{\lambda_{N}\}$ from the
perturbation  sequence $\varepsilon_{N}$ is crucial for establishing
the sampling approximation error in \eqref{irsamapp}.
\end{rem}

\begin{rem}\label{nonu}
(I) The estimate in   \eqref{irsamapp} states that the approximation
$\sum_{k\in\mathbb{Z}^{d}}f(M^{-N}(k+\varepsilon_{N,k}))\phi(M^{N}\cdot-k)$
of $f$ is robust to the perturbation sequence
$\varepsilon_{N}=\{\varepsilon_{N,k}:=\theta_{N,k}+\lambda_{N}\}_{k\in\mathbb{Z}^{d}}$.
At every scale level $N$, if  the perturbation sequence satisfies
\begin{align}\label{tj} ||\lambda_{N}||_{2}^{\zeta}+
\max\{||\theta_{N}||_{l^{2}(\mathbb{Z}^{d})},
||\theta_{N}||^{\alpha/2}_{l^{\alpha}(\mathbb{Z}^{d})}\}
=\hbox{o}(m^{N\gamma}),\end{align}  where $\gamma<\zeta/2$, then
$\lim_{N\rightarrow\infty}\sum_{k\in\mathbb{Z}^{d}}f(M^{-N}(k+\varepsilon_{N,k}))\phi(M^{N}\cdot-k)=f$.

(II) The nonuniform sampling approximation in Theorem \ref{Theorem
x4} \eqref{irsamapp} depends on Theorem \ref{Theorem xe4} and
Theorem \ref{Theorem x2}. It follows from  Remark \ref{lubax} (II)
that the approximation in \eqref{irsamapp} is robust to the
perturbation sequence provided that $s>d/2.$

(III) In the theory of nonuniform sampling in shift-invariant spaces
(c.f. \cite{Aldroubi1,Qiyu0}), the corresponding  sample set $
X=\{x_{k}\}$ is relatively-separated. Specifically, there exists a
positive constant $\mathscr{D}(X)$ such that
\begin{align}
\label{kefen}\sum_{x_{k}\in X}\chi_{[0, 1]^{d}+x_{k}}(x)\leq
\mathscr{D}(X)
\end{align}
for any $x\in \mathbb{R}^{d}$.
 The relatively-separatedness is a natural requirement for
the finite rate of innovation of sampling \cite{Qiyu0}. For any
fixed scale level $N$,  our sampling set
$\{M^{-N}(k+\varepsilon_{N,k})\}_{k\in\mathbb{Z}^{d}}$ in Theorem
\ref{Theorem x4} is relatively-separated. Particularly, using  the
equivalence of the norms $||\cdot||_{\infty}$ and $||\cdot||_{2}$ of
$\mathbb{R}^{d}$,  it is easy to prove  that
 \eqref{kefen} holds with $X$ being replaced by
$\{M^{-N}(k+\varepsilon_{N,k})\}_{k\in\mathbb{Z}^{d}}$, and the
upper bound $\mathscr{D}(X)$ replaced  by
\begin{align}\label{shangjie} \mathscr{D}_{N}(X)=\big(\lfloor
\sqrt{d}||\lambda_{N}||_{2}+\sqrt{d}||\theta_{N}||_{l^{2}(\mathbb{Z}^{d})}+2\sqrt{d}m^{N}\rfloor\big)^{d}.\end{align}
From \eqref{shangjie}, however,  we do  not expect
$\{\mathscr{D}_{N}(X)\}_{N}$ is uniformly bounded. The underlying
reason is that Theorem \ref{Theorem x4} is on the  sampling
reconstruction of all the functions in Sobolev space
$H^{s}(\mathbb{R}^{d})$, but not just on that in   a shift-invariant
subspace.
\end{rem}

Next we make a comparison between Theorem \ref{Theorem x4} and the
existing results on sampling approximation in
$H^{s}(\mathbb{R}^{d})$.

\begin{comparison}\label{duibi}
There are some papers addressing the sampling approximation to the
functions in $H^{s}(\mathbb{R}^{d})$, see
\cite{Brown,fagep,Butzer4,Skopina0,Skopina2} and the references
therein. The approximations in the references above are carried out
by uniform samples. As mentioned in Section \ref{section1}, however,
due to the inertia of a measuring instrument, it is very difficult
to sample at an exact time. Instead, the samples we acquire may well
be jittered. Therefore, it is necessary to construct the nonuniform
sampling approximation to the functions in $ H^{s}(\mathbb{R}^{d})$.
To the best of our knowledge, the problem has not been solved in the
literature. In Theorem \ref{Theorem x4} \eqref{irsamapp}, we
constructed \textbf{a type of nonuniform sampling approximation
holding for the entire space}  $H^{s}(\mathbb{R}^{d})$. By Remark
\ref{nonu}, if the samples satisfy \eqref{tj}, then the
approximation is stable. We next concretely compare our results with
the existing ones.

The function $\phi$ in \eqref{irsamapp} can be bandlimited or
non-bandlimited. When selecting the $2$-refinable function
\cite{HanZ} $\phi(x_{1}, \ldots,
x_{d})=\prod^{d}_{j=1}\hbox{sinc}(x_{j}):=\prod^{d}_{j=1}\frac{\sin\pi
x_{j}}{\pi x_{j}}$, then it follows from \eqref{irsamapp} that
\begin{align}\label{sincapp} f(x_{1},
x_{2}, \ldots,
x_{d})\approx\sum_{k\in\mathbb{Z}^{d}}f(2^{-N}(k+\varepsilon_{N,k}))\prod^{d}_{j=1}\hbox{sinc}(2^{N}x_{j}-k_{j}).\end{align}
Moreover, if $d=1$ and
\begin{align} \label{unifr}\varepsilon_{N}=\{\varepsilon_{N,k}\}=0, \end{align}
then the sampling approximation results for $H^{s}(\mathbb{R})$ in
\cite{Brown,Butzer4,Skopina0} is revisited. When \eqref{unifr} holds
and $f$ satisfies
$$|\widehat{f}(\xi)|\leq C(1+||\xi||_{2})^{\frac{-d-\alpha}{2}} \ \hbox{for} \ \hbox{every}\ \xi\in \mathbb{R}^{d}$$
with $\alpha>0$ and  a constant  $C$ being dependent on $f$, then
the approximation in \eqref{irsamapp} reduces to the results in
\cite{fagep}.

Using a function $\phi$ satisfying some orders of Strang-Fix
condition, Krivoshein and  Skopina \cite{Skopina2} constructed the
approximation to smooth functions by the uniform samples of
functions and their derivatives. The nonuniform sampling in
\eqref{irsamapp} holds for all the functions in $
H^{s}(\mathbb{R}^{d})$ where $s>d/2.$ For any $f\in
H^{s}(\mathbb{R}^{d})$, it is not necessary smooth. For example, the
box spline $B_{\Xi}(x_{1}, x_{2})=B_{2}(x_{1})B_{2}(x_{2})$ is not
smooth, and by \cite{Hanbin1}, $B_{\Xi}\in H^{s}(\mathbb{R}^{2})$
with $1<s<3/2,$ where $B_{2}$ is the cardinal B-spline of order $2$,
defined by
\begin{align}\label{box}
B_{2}(t)= \left\{
 \begin{array}{ll}
 t, & \hbox{ $t\in [0,1)$} \\
 2-t, & \hbox{ $t\in [1,2]$} \\
 0, & \hbox{ else}
  \end{array}.
  \right.
\end{align}

%

\end{comparison}

\section{Numerical experiment---randomly jittered  sampling}
In this section, numerical experiments are carried out to confirm
the efficiency of our sampling approximation formula Theorem
\ref{Theorem x4} \eqref{irsamapp}.   Provided that \eqref{tj} holds,
the  sequence
$\varepsilon_{N}=\{\varepsilon_{N,k}:=\theta_{N,k}+\lambda_{N}\}_{k\in\mathbb{Z}^{d}}$
in \eqref{irsamapp} is supposed to be random for avoiding the bias
toward perturbation.

\subsection{One dimension}  \label{1wei}
Let $f(x)=e^{-|x|}, x\in\mathbb{R}.$ It is not smooth, and its
Fourier transform $\widehat{f}(\xi)=\frac{2}{1+(2\pi \xi)^{2}}.$
Obviously, $f\in H^{s}(\mathbb{R})$, where $1/2<s<3/2.$   In this
subsection, we use \eqref{irsamapp} with $\phi=\hbox{sinc}$ to
approximate $f$ on $[-40, 40],$ where $\lambda_{N}$ and
$\theta_{N,k}, k\in \mathbb{Z}$ are independent, and obey the
uniform distribution on $[-1, 1]$. Specifically,
\begin{align} \label{bjgs} f\approx \sum^{89\times 2^{N}}_{k=-89\times 2^{N}}f(2^{-N}(k+\varepsilon_{N,k}))\hbox{sinc}(2^{N}\cdot-k),\end{align}
and the relative error is defined as
\begin{align}  \hbox{error}=||f-\sum^{89\times 2^{N}}_{k=-89\times 2^{N}}f(2^{-N}(k+\varepsilon_{N,k}))\hbox{sinc}(2^{N}\cdot-k)||_{2}/||f||_{2}.\end{align}
For $N=10$, the  formula  \eqref{bjgs} is carried out to approximate
$f$ for $30$ times. See Figure \ref{figure4_1} for the error
distribution.

\begin{figure}\label{figure4_1}
    \centering
\includegraphics[width=13cm, height=8cm]{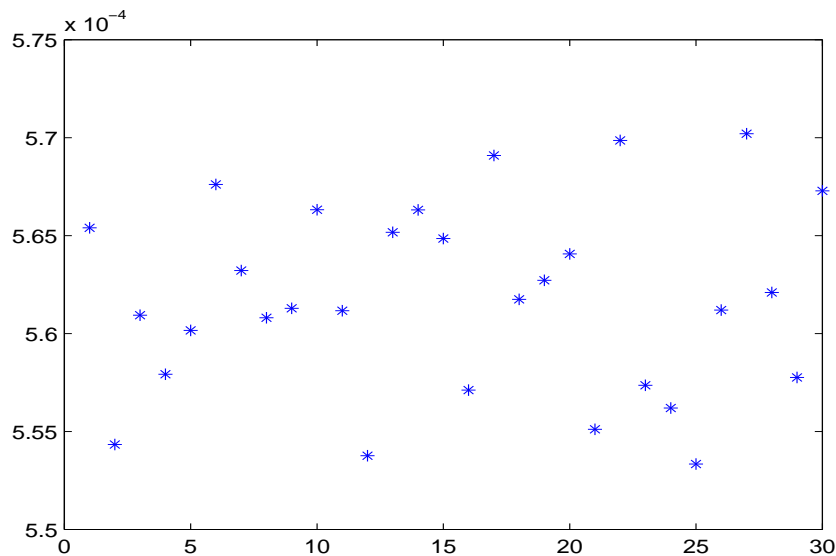}
    \caption{The error distribution when $N=10$.}
\end{figure}

\subsection{Two dimensions}
Let $\phi(x_{1}, x_{2})=B_{\Xi}(x_{1},
x_{2})=B_{2}(x_{1})B_{2}(x_{2})$, where $B_{2}$ is the cardinal
B-spline of order $2$ defined in \eqref{box}. By \cite{Hanbin1},
$\phi(x_{1}, x_{2})$ is $2$-refinable and $\phi\in
H^{s}(\mathbb{R}^{2})$ with $1<s<3/2.$ Suppose that
$$f(x_{1}, x_{2})=e^{-(|x_{1}|+|x_{2}|)}+e^{-(x_{1}^{2}+x_{2}^{2})}.$$
It follows from Subsection \ref{1wei} that $e^{-(|x_{1}|+|x_{2}|)}
\in H^{s}(\mathbb{R}^{2})$ where $ s\in (1, 3/2)$. On the other
hand, $e^{-(x_{1}^{2}+x_{2}^{2})}\in H^{s}(\mathbb{R}^{2})$ for any
$s\in \mathbb{R}^{+}$. Therefore $f\in H^{s}(\mathbb{R}^{2}), s\in
(1, 3/2)$. We next use \eqref{irsamapp} to approximate $f$ on $[-20,
20]^{2}$. That is,
\begin{align} \label{56} f|_{[-20, 20]^{2}} \approx\sum_{k\in \mathbb{Z}^{2}}f(2^{-N}(k+\varepsilon_{N,k}))\phi(2^{N}\cdot-k)\mid_{[-20, 20]^{2}}.\end{align}
The corresponding relative error is defined as
\begin{align} \label{bjgs1} \hbox{error}=||f|_{[-20, 20]^{2}}-\sum_{k\in \mathbb{Z}^{2}}f(2^{-N}(k+\varepsilon_{N,k}))\phi(2^{N}\cdot-k)\mid_{[-20, 20]^{2}}||_{2}/||f|_{[-20, 20]^{2}}||_{2}.\end{align}
Since $\phi$ is compactly supported, the series in \eqref{56} is
actually involved with finite sums. On the other hand, $\lambda_{N}$
and $\theta_{N,k}, k\in \mathbb{Z}^{2}$ are independent, and obey
the uniform distribution on $[-1, 1]^{2}$. When $N=10$, the
approximation formula in \eqref{56} is carried out for $30$ times.
See Figure \ref{figure42} for the error distribution.

\begin{figure}\label{figure42}
    \centering
\includegraphics[width=13cm, height=8cm]{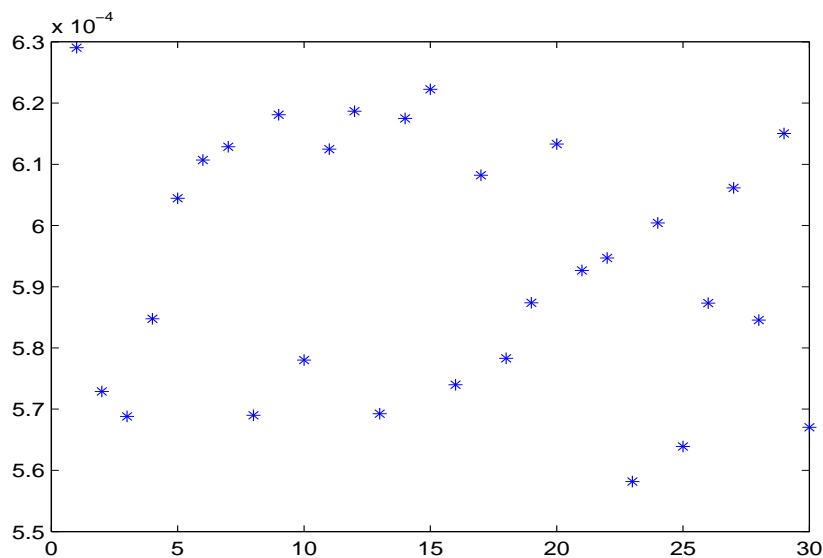}
    \caption{The error distribution when $N=10$.}
\end{figure}

\end{document}